\newcommand\SetOf[2]{\left\{\left.#1\vphantom{#2}\ \right|\ #2\vphantom{#1}\right\}}
\newcommand\midSetOf[2]{\Bigl\{\Bigl.#1\vphantom{#2}\ \Bigr|\ #2\vphantom{#1}\Bigr\}}
\newcommand\smallSetOf[2]{\{{#1}\,|\,{#2}\}}
\newtheoremstyle{theorems}
{13pt}
{13pt}
{\slshape}
{}
{\bfseries}
{}
{.5em}
{}
\theoremstyle{theorems}
\newtheorem{theorem}{Theorem}[section]
\newtheorem{corollary}[theorem]{Corollary}
\newtheorem{lemma}[theorem]{Lemma}
\newtheorem{proposition}[theorem]{Proposition}
\newtheorem*{theorem-no-label}{Theorem}
\newtheoremstyle{definition}
{12pt}
{12pt}
{}
{}
{\bfseries}
{}
{.5em}
{}
\theoremstyle{definition}
\newtheorem{definition}[theorem]{Definition}
\newtheorem{example}[theorem]{Example}
\newtheorem{remark}[theorem]{Remark}
\newcommand{\conv}{\operatorname{conv}}
\DeclareMathOperator{\Tutte}{T}
\newcommand{\RR}{\mathbb{R}}
\newcommand{\ZZ}{\mathbb{Z}}
\newcommand{\cT}{\mathcal{T}}
\newcommand{\cC}{\mathcal{C}}
\title{Good Triangulations of Cosmological Polytopes}
\subjclass[2020]{05A15 52B05 52B20
(primary) 05C10 05C30 (secondary)}
\keywords{Cosmological polytopes, Ehrhart theory, regular triangulations, Tutte polynomials, graph invariants}
\thanks{KF is funded by the Deutsche Forschungsgemeinschaft (DFG, German Research Foundation) under Germany´s Excellence Strategy 
– The Berlin Mathematics Research Center MATH+ (EXC-2046/1, project ID: 390685689). BS is supported by the Swedish Research Council grant 2022-04224.}
\author[A.~Benjes]{Aenne Benjes}
\address{Institute for Mathematics, Goethe University, Frankfurt am Main, Germany
}
\email{benjes@math.uni-frankfurt.de}
\author[K,.~Ferry]{Kamillo Ferry}
\address{
  Institute of Mathematics, Technische Universität Berlin, Berlin, Germany
}
\email{ferry@math.tu-berlin.de}
\author[B.~Schr\"oter]{Benjamin Schr\"oter}
\address{
  Department of Mathematics, KTH Royal Institute of Technology, Stockholm, Sweden
}
\email{schrot@kth.se}
\begin{document}
\maketitle

\begin{abstract}
Cosmological polytopes of graphs are a geometric tool in physics to study wavefunctions for cosmological models whose Feynman diagram is given by the graph. After their recent introduction by Arkani-Hamed, Benincasa and Postnikov the focus of interest shifted towards their mathematical properties, e.g., their face structure and triangulations. Juhnke, Solus and Venturello used toric geometry to show that these polytopes have a so-called good triangulation that is unimodular. 
Based on these results
Bruckamp et al.\ 
studied the Ehrhart theory of those polytopes and in particular the $h^*$-polynomials of cosmological polytopes of multitrees and multicycles.
In this article we complete this part of the story.  We enumerate all maximal simplices in good triangulations of any cosmological polytope.
Furthermore, we provide a method to turn such a triangulation into a half-open decomposition from which we deduce that the $h^*$-polynomial of a cosmological polytope is a specialization of the Tutte polynomial of the defining graph.
This settles several open questions and conjectures of Juhnke, Solus and Venturello as well as Bruckamp et al.
\end{abstract}
\thispagestyle{empty}

\section{Introduction}
\noindent
There are various polyhedral objects one may associate with a graph $G$, e.g., its (symmetric) edge polytope and their duals which are alcoved polytopes (see for example \cite{HerzogHibiOhsugi:2018, Joswig:2021, LamPostnikov:2007, LamPostnikov:2018}), the matroid polytope, independence complex or broken circuit complex of the graphical matroid of $G$ (see for example \cite{Edmonds:1970} and \cite{MatroidApplications}). In this article we study another such object, namely the cosmological polytope $\cC_G$ of the graph~$G$ which is the convex hull of the vectors $e_u+e_v-e_f$, $e_u-e_v+e_f$ and $-e_u+e_v+e_f$ for all edges $f=\{u,v\}$ of the graph~$G$; see Definition~\ref{def:cosmological_polytope} for a more detailed definition. This polytope has been introduced in \cite{ArkaniHamedBenincasaPostnikov} by Arkani-Hamed, Benincasa and Postnikov to study the physics of cosmological time evolution and
the wavefunction of the universe using positive geometries. Since then their mathematical structure has been investigated in more detail.
Benincasa \cite{Benincasa} as well as K\"uhne and Monin \cite{KuehneMonin} investigated their face structure combinatorially. Juhnke, Solus and Venturello used toric geometry and Gröbner bases to show that these polytopes poses a unimodular triangulation \cite{JuhkeSolusVenturello}, and Bruckamp, Gotermann, Juhnke, Ladin and Solus \cite{BruckampGoltermannJuhnkeLandinSolus} shifted the focus towards  the Ehrhart theory of cosmological polytopes. They found formulas for the $h^\ast$-polynomial for the families of multitrees and multicycles.
For any polytope $P$, the coefficients of this polynomial are a refinement of the (lattice) volume of the polytope $P$, and the $h^\ast$-polynomial coincides with the $h$-vector of any unimodular triangulation of $P$ whenever it exists. Moreover, it is the numerator polynomial of the generating function of the Ehrhart polynomial of $P$ which counts the lattice points in dilations of~$P$; see also Section \ref{sec:decomp} below.

In this paper we explore the Ehrhart theory and $h^\ast$-polynomial of cosmological polytopes further by considering a polyhedral and geometric approach which uses results of \cite{JuhkeSolusVenturello} and is inspired by \cite{BruckampGoltermannJuhnkeLandinSolus}, but independent of the latter.
We begin in Section~\ref{sec:good_triang} with proving Theorem~\ref{thm:main} which provides an enumeration of the maximal simplices in any so called good triangulations of a cosmological polytope in terms of decorations of the graph $G$. 

In Section~\ref{sec:decomp} we describe how one can derive a half-open decomposition from a good triangulation.
This decomposition allows us to prove with Corollary~\ref{cor:h_star} a first formula for the $h^\ast$-polynomial of a cosmological polytope. This formula has been conjectured in \cite{BruckampGoltermannJuhnkeLandinSolus} and depends on the triangulation of the polytope.

In Section~\ref{sec:h_star_polynomials} we combine the description in Theorem~\ref{thm:main} with the formula of Corollary~\ref{cor:h_star} to obtain formulas for the $h^\ast$-polynomial that are independent of the triangulation.
The formula in Theorem~\ref{thm:hstar} only requires the spanning forests of the graph $G$, and the formula in Theorem~\ref{thm:hstar2} only involves the bridge free edge sets. 
By applying these formulas to multitrees (Example~\ref{ex:multitrees}) and multicyles (Example~\ref{ex:multicycles}) we reprove the main results of \cite{BruckampGoltermannJuhnkeLandinSolus}, and are able to show that their conjectured upper bound of the coefficients of the $h^\ast$-polynomial holds true (Corollary~\ref{cor:bound}).
We further look at generalized $\Theta$-graphs (Example~\ref{ex:theta}) and some bipartite graphs (Example~\ref{ex:bipartite}) for which the $h^\ast$-polynomial or volume of the cosmological polytope has been considered before.
This way we prove yet another conjecture of Bruckamp et al.
Finally we show with Theorem~\ref{thm:tutte} that the $h^\ast$-polynomial is a specialization of the Tutte polynomial. In other words the $h^\ast$-polynomial of a cosmological polytope is a graph invariant which satisfies a deletion-contraction recurrence. This finding shows that
the coefficients of the $h^\ast$-polynomial form a ultra log-concave sequence (Corollary~\ref{cor:log-concave}), and  furthermore it allows us to express the volume of a cosmological polytope as a simple graph invariant (Corollary~\ref{cor:volume}), namely as the the number of acyclic subsets of the edges times two to the power of the number of edges. This way we 
answer the two open questions \cite[Problem 6.1 and 6.2]{JuhkeSolusVenturello}.

\section{Good triangulations}\label{sec:good_triang}
\noindent In this section we recall the definition of cosmological polytopes and good triangulations in a geometric fashion. Afterwards we prove a combinatorial bijection between certain decorated graphs and the maximal simplices in a good triangulation.

\begin{definition}\label{def:cosmological_polytope}
    Let $G=(V,E)$ be an undirected graph with $n$ nodes $V$ and $m$ edges $E$. We do allow loops, multiple edges as well as isolated nodes.
The \emph{cosmological polytope}~$\cC_G$ of the graph $G$ is the convex hull of the lattice points
\[
\SetOf{e_f, \widetilde{e}_f, \overleftarrow{e}_f, \overrightarrow{e}_f}{f\in E}\cup\SetOf{e_u}{u\in V}
\subseteq\RR^{V}\times\RR^{E}\cong\RR^{n+m}
\]
where
\[
\widetilde{e}_f = e_u+e_v-e_f,\qquad 
\overleftarrow{e}_f = e_u-e_v+e_f \qquad \text{ and } \qquad
\overrightarrow{e}_f = -e_u+e_v+e_f 
\]
for an edge $f$ which is incident to the nodes $u$ and $v$. In the remainder it might be required to distinguish the vertex $\overleftarrow{e}_f$ from $\overrightarrow{e}_f$. Therefore we may assume that the vertices of an edge are ordered whenever necessary. Moreover, we identify the space $\RR^{V}\times\RR^{E}$ with the vector space $\RR^{n+m}$.
\end{definition}
\begin{remark}
    The listed points in the definition above are all the lattice points in the cosmological polytope $\cC_G$.
    Its vertices are the points $\widetilde{e}_f$, $\overleftarrow{e}_f$, and $\overrightarrow{e}_f$ as well as the points $e_u$ for isolated nodes $u\in V$.
    We have $\overleftarrow{e}_f=\overrightarrow{e}_f=e_f$ whenever $f$ is a loop.
    Thus $\cC_G$ is a pyramid with apex $e_u$ if $u$ is an isolated node and a bipyramid if $f$ is a loop.
    Our definition agrees with the definition made in \cite{KuehneMonin}.
\end{remark}
The cosmological polytope $\cC_G$ is contained in the hyperplane $\sum x_i =1$, and contains the standard simplex with vertices $e_u$ and $e_f$ for $u\in V$ and $f\in E$. Thus it is a $n+m-1$ dimensional polytope.

We follow the ideas and names invented in \cite{JuhkeSolusVenturello} by Juhnke, Solus and Venturello to study cosmological polytopes.
They used the language of Gr\"obner bases and toric geometry to prove that each cosmological polytope has an unimodular triangulation. More precisely, any triangulation which is induced by a so called \emph{good term order} is unimodular. We follow their lead and call a triangulation~$\cT$ that uses all the lattice points in the polytope $\cC_G$ a \emph{good triangulation} if
the subdivision contains the $(n+m-1)$-dimensional standard simplex as a (maximal) cell and it is regular, i.e., induced by a height function.
In this case, the height function $\psi:\cC_G\cap\ZZ^{n+m}\to\RR$ that induces $\cT$ satisfies the following six types of \emph{fundamental inequalities}
\begin{align*}
        \psi(e_u)+\psi(e_v) &\leq \psi(e_f)+\psi(\widetilde{e}_f), &
        2\, \psi(e_f) &\leq \psi(\overrightarrow{e}_f)+\psi(\overleftarrow{e}_f)\\
        \psi(e_u)+\psi(e_f) &\leq \psi(\overleftarrow{e}_f)+\psi(e_v), &
        2\, \psi(e_v) &\leq \psi(\widetilde{e}_f)+\psi(\overrightarrow{e}_f)\\
        \psi(e_v)+\psi(e_f) &\leq \psi(\overrightarrow{e}_f)+\psi(e_u), &
        2\, \psi(e_u) &\leq \psi(\widetilde{e}_f)+\psi(\overleftarrow{e}_f)
\end{align*}
for every edge $f$ incident to the nodes $u$ and $v$. Furthermore, the following inequalities are satisfied
\[
\sum_{f\in C} \psi(e_f)\leq \sum_{f\in C} \psi(\overleftarrow{e}_f) \text{ and } \sum_{f\in C} \psi(e_f)\leq \sum_{f\in C} \psi(\overrightarrow{e}_f)
\]
for every (oriented) cycle $C=\{(u_1,u_2),(u_2,u_3)\ldots,(u_{k-1},u_k),(u_k,u_1)\}\subseteq E$ of $G$.

Examples of good triangulations are all placing triangulations of the lattice points in the cosmological polytope $\cC_G$ which place the standard simplex first, or regular triangulations whose lifting function is zero on the standard simplex and (strictly) positive on all other points.
In this paper we view a triangulation as a simplicial complex and identify the vertices of a simplex in the triangulation with the simplex itself. For further definitions, notions and a more detailed background on triangulations we point the reader to \cite{DeLoeraRambauSantos:2010}.

Our aim is to count (half-open) simplices in a good triangulations $\cT$. Each maximal simplex $S\in\mathcal{T}$ gives rise to a \emph{decoration} of $G$ that we denote by $G_S$.
A \emph{decoration} consists of two types of nodes, \emph{selected nodes} $V(S) = \smallSetOf{u\in V}{e_u\in S}$ and unselected nodes $V\setminus V(S)$, the edges might be \emph{squiggly} $\widetilde{E}(S)=\smallSetOf{f\in E}{\widetilde{e}_f\in S}$, \emph{directed} $\overleftarrow{E}(S)\cup\overrightarrow{E}(S)$ where $\overrightarrow{E}(S) = \smallSetOf{f\in E }{\overrightarrow{e}_f\in S}$ and $\overleftarrow{E}(S) = \smallSetOf{f\in E }{\overleftarrow{e}_f\in S}$ or \emph{selected} $\widehat E(S) = \smallSetOf{f\in E}{e_f\in S}$; see also \cite{JuhkeSolusVenturello} and \cite{BruckampGoltermannJuhnkeLandinSolus}.
We call an edge a \emph{double edge} if it is selected and directed in one of the two ways. 
We denote the set of all double edges by $D(S)= (\overleftarrow{E}(S)\cup\overrightarrow{E}(S))\cap \widehat{E}(S)$.
Moreover, we say an edge is a \emph{simply decorated edge} if it is neither squiggly nor an double edge, that is it is directed or selected but not both.
It is worth mentioning that a squiggly edge can never be directed or selected in $G_S$.

In our pictures we mark selected nodes in $G_S$ as filled circles and we draw edges $f\in E$ according to their decoration, it is present whenever it is a selected edge which we denote by $f$, squiggly $\widetilde{f}$ if $\widetilde{e}_f\in S$ or directed in one of two ways $\overleftarrow{f}$ and $\overrightarrow{f}$. See Figure~\ref{fig:ex1} and Figure~\ref{fig:ex2} further below for examples.\footnote{Our drawings and labeling of decorated graphs does not agree with those used in \cite{JuhkeSolusVenturello} and \cite{BruckampGoltermannJuhnkeLandinSolus}, but is more natural to us.}

\begin{figure}
   \begin{subfigure}[c]{0.49\textwidth}
   \includegraphics{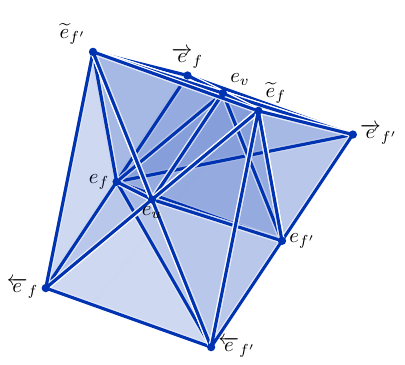}
      \subcaption{The triangulated polytope.\label{subfig:triang}}
   \end{subfigure}
   \begin{subfigure}[c]{0.49\textwidth}
   \begin{tikzpicture}[scale = 0.9,
                    color = {black}]

  \tikzset{->-/.style={decoration={ markings, mark=at position #1 with {\arrow{>}}},postaction={decorate}}}
  
  \definecolor{myblue}{rgb}{0,0.2,0.7}
  \tikzstyle{linestyle} = [ultra thick, line cap=round, line join=round];
  \tikzstyle{linestyle2}= [decorate, decoration=snake, ultra thick, line cap=round, line join=round]
  \tikzstyle{linestyle3} = [->-=0.8,> = stealth, ultra thick, line cap=round, line join=round];
  \tikzstyle{linestyleX} = [dotted, line cap=round, line join=round];
  \tikzstyle{nodestyle} = [draw=black, thick, circle, fill=white, inner sep=1.5pt];
  \tikzstyle{nodestyle2} = [draw=black, thick, circle, fill=black, inner sep=1.5pt];
  \tikzstyle{nodestyle3} = [draw=none, fill=white, circle, inner sep=0pt];

  \draw[ultra thick, white] (-3.45,-3.45,0) -- (3.45,3.45,0);
  
  \newcommand{\graphx}[4]{
    \coordinate (t) at (#1,#2);
    \coordinate (u) at ($(-.5, 0)+(t)$);
    \coordinate (v) at ($(.5, 0)+(t)$);

    \ifthenelse{#3=0}{
        \node[nodestyle3] at ($(0, .4)+(t)$) {\tiny $f$};
        \draw[linestyle] (u) to[bend left=20] (v);
    }{
    \ifthenelse{#3=1}{
        \node[nodestyle3] at ($(0, .5)+(t)$) {\tiny $\widetilde{f}$};
        \draw[linestyle2] (u) to[bend left=20] (v);
    }{
    \ifthenelse{#3=2}{
        \node[nodestyle3] at ($(0, .6)+(t)$) {\tiny $f$, $\overleftarrow{f}$};
        \draw[linestyle3] (v) to[bend right=45] (u);
        \draw[linestyle] (v) to[bend right=10] (u);
    }{
        \node[nodestyle3] at ($(0, .6)+(t)$) {\tiny $f$, $\overrightarrow{f}$};
        \draw[linestyle3] (u) to[bend left=45] (v);
        \draw[linestyle] (u) to[bend left=10] (v);
    }
    }
    }
    \ifthenelse{#4=0}{
        \node[nodestyle3] at ($(0, -.4)+(t)$) {\tiny $f'$};
        \draw[linestyle] (u) to[bend right=20] (v);
    }{
    \ifthenelse{#4=1}{
        \node[nodestyle3] at ($(0,-.5)+(t)$) {\tiny $\widetilde{f}'$};
        \draw[linestyle2] (u) to[bend right=30] (v);
    }{
    \ifthenelse{#4=2}{
        \node[nodestyle3] at ($(0,-.6)+(t)$) {\tiny $f'$, $\overleftarrow{f}'$};
        \draw[linestyle3] (v) to[bend left=45] (u);
        \draw[linestyle] (v) to[bend left=10] (u);
    }{
    \ifthenelse{#4=3}{
    \node[nodestyle3] at ($(0,-.6)+(t)$) {\tiny $f'$, $\overrightarrow{f}'$};
        \draw[linestyle3] (u) to[bend right=45] (v);
        \draw[linestyle] (u) to[bend right=10] (v);
    }{
    \ifthenelse{#3=3}{
        \node[nodestyle3] at ($(0,-.6)+(t)$) {\tiny $\overrightarrow{f}'$};
        \draw[linestyle3] (u) to[bend right=45] (v);
        \draw[linestyleX] (u) to[bend right=12] (v);
    }{
        \node[nodestyle3] at ($(0,-.6)+(t)$) {\tiny $\overleftarrow{f}'$};
        \draw[linestyle3] (v) to[bend left=45] (u);
        \draw[linestyleX] (u) to[bend right=12] (v);
    }
    }
    }
    }
    }
    \ifthenelse{#3=2 \or #4=2}{
        \node[nodestyle3] at ($(-.2, -.1)+(u)$) {\tiny $u$};
        \node[nodestyle2] at (u) {};
        \node[nodestyle] at (v) {};
    }{
    \ifthenelse{#3=3 \or #4=3}{
        \node[nodestyle3] at ($(.2, -.1)+(v)$) {\tiny $v$};
        \node[nodestyle] at (u) {};
        \node[nodestyle2] at (v) {};
    }{
        \node[nodestyle3] at ($(-.2, -.1)+(u)$) {\tiny $u$};
        \node[nodestyle3] at ($(.2, -.1)+(v)$) {\tiny $v$};
        \node[nodestyle2] at (u) {};
        \node[nodestyle2] at (v) {};
    }
    }

  }

  \graphx{-3}{ 2}{0}{0}
  \graphx{-1}{ 2}{1}{0}
  \graphx{ 1}{ 2}{2}{4} 
  \graphx{ 3}{ 2}{3}{4} 
  
  \graphx{-3}{ 0}{0}{1}
  \graphx{-1}{ 0}{1}{1}
  \graphx{ 1}{ 0}{2}{1}
  \graphx{ 3}{ 0}{3}{1}
  
  \graphx{-3}{-2}{0}{2}
  \graphx{-1}{-2}{1}{2}
  \graphx{ 1}{-2}{0}{3}
  \graphx{ 3}{-2}{1}{3}

\end{tikzpicture}
      \subcaption{The twelve decorated graphs of $\cT$.\label{subfig:dec}}
   \end{subfigure}
   \caption{A good triangulation $\cT$ of the cosmological polytope of Example~\ref{ex:ex1}.\label{fig:ex1}}
\end{figure}

\begin{example}\label{ex:ex1}
    Consider the graph $G$ on two nodes with two parallel edges. Its cosmological polytope $\cC_G$ is a prism over an triangle. A good triangulation of this three-dimensional polytope and all its ten lattice points are depicted in Figure~\ref{subfig:triang}).
    The twelve decorations of the graph $G$ that are in bijection to the twelve maximal simplices in this triangulation are shown in Figure~\ref{subfig:dec}).
    In total the polytope $\cC_G$ has four good triangulations depending whether $\psi(\overrightarrow{e}_f)+\psi(e_{f'})$ is larger than $\psi(\overrightarrow{e}_{f'})+\psi(e_f)$ and whether $\psi(\overleftarrow{e}_f)+\psi(e_{f'})$ is larger than $\psi(\overleftarrow{e}_{f'})+\psi(e_f)$ or not.
\end{example}

The following is a key lemma in our approach which allows us to delete edges or swap simply decorated edges for squiggly edges.
\begin{lemma}\label{lem:tool}
    Let $G$ be the subgraph of the graph $\hat G$ on the same set of nodes where the edge $f$ with nodes $u$, $v$ is removed.
    Furthermore let $\cT_{\hat G}$ be a good triangulation of the polytope $\cC_{\hat G}$ which is induced by the height function $\psi$, and $\cT_G$ be the triangulation of $\cC_G$ that is induced by restricting $\psi$ to the lattice points in $\cC_G$. 
    Then $S$ is a simplex in the triangulation $\cT_G$ of $\cC_G$ if and only if
    $S\cup\{\widetilde{e}_f\}$ is a simplex in the triangulation $\cT_{\hat G}$.
    Furthermore, this is the case if and only if $S\cup\{e_f\}$, $S\cup\{\overrightarrow{e}_f\}$ or $S\cup\{\overleftarrow{e}_f\}$ is a simplex in the triangulation $\cT_{\hat G}$ of $\cC_{\hat G}$.
\end{lemma}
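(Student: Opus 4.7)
The plan is to exploit that good triangulations are regular: a set $T$ of lattice points of $\cC_{\hat G}$ is a simplex of $\cT_{\hat G}$ precisely when there exists an affine functional $A$ on $\RR^{n+m}$ with $A(x) = \psi(x)$ for every $x \in T$ and $A(y) \leq \psi(y)$ for every other lattice point $y$ of $\cC_{\hat G}$; the analogous characterization holds for $\cT_G$. I will translate affine witnesses between the two triangulations by controlling the single extra coordinate $x_f$. Since faces of simplices in $\cT_{\hat G}$ are again simplices in $\cT_{\hat G}$, it suffices to treat the case that $S$ is maximal in $\cT_G$.

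The converse directions are handled uniformly. If any of the four claimed extensions $T$ of $S$ is a simplex of $\cT_{\hat G}$ witnessed by an affine functional $A$, then restricting $A$ to the hyperplane $\{x_f = 0\}$ yields an affine functional that equals $\psi$ on $S \subseteq T$ and remains bounded above by $\psi$ on the lattice points of $\cC_G$, all of which lie in this hyperplane; hence $S$ is a simplex of $\cT_G$.

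For the forward direction I take a witness $A_G$ for a maximal simplex $S \in \cT_G$ and extend $A_G$ to an affine functional $A$ on $\RR^{n+m}$ by assigning some scalar $c$ to the coordinate $x_f$. Expanding the inequality $A(y) \leq \psi(y)$ at the four lattice points $y \in \{\widetilde{e}_f,\, e_f,\, \overrightarrow{e}_f,\, \overleftarrow{e}_f\}$ produces one lower bound
\[
c \geq A_G(e_u) + A_G(e_v) - \psi(\widetilde{e}_f)
\]
coming from $\widetilde{e}_f$ and three upper bounds
\[
c \leq \psi(e_f), \qquad c \leq \psi(\overrightarrow{e}_f) + A_G(e_u) - A_G(e_v), \qquad c \leq \psi(\overleftarrow{e}_f) + A_G(e_v) - A_G(e_u)
\]
coming from $e_f$, $\overrightarrow{e}_f$, and $\overleftarrow{e}_f$ respectively. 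Combining the three fundamental inequalities involving $\widetilde{e}_f$ with $A_G(e_u) \leq \psi(e_u)$ and $A_G(e_v) \leq \psi(e_v)$ shows that each of the three upper bounds dominates the lower bound. Choosing $c$ equal to the lower bound forces $A(\widetilde{e}_f) = \psi(\widetilde{e}_f)$ and hence $S \cup \{\widetilde{e}_f\} \in \cT_{\hat G}$, while choosing $c$ equal to the minimum of the three upper bounds forces equality at whichever of $e_f$, $\overrightarrow{e}_f$, $\overleftarrow{e}_f$ attains this minimum, producing one of the remaining three extensions. The main obstacle is carefully tracking the signs across the three upper-bound comparisons; the loop case collapses cleanly since then $\overrightarrow{e}_f = \overleftarrow{e}_f = e_f$ and the three upper bounds coincide.
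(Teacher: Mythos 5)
Your proof is correct, and it is closely related to the paper's argument but somewhat more explicit. Both proofs hinge on the hyperplane $H=\{x_f=0\}$ cutting $\cC_{\hat G}$ into $\cC_G$ and on the fundamental inequalities involving $\widetilde{e}_f$. The paper argues geometrically: it deduces from the fundamental inequalities that no cell of $\cT_{\hat G}$ contains $\widetilde{e}_f$ together with any of $e_f,\overleftarrow{e}_f,\overrightarrow{e}_f$, hence no cell crosses $H$, and then performs a combinatorial facet count at the hyperplane to conclude. You instead work directly with the lifting-function characterization of regularity and construct an explicit affine witness by extending $A_G$ with a scalar $c$ on the $x_f$-coordinate, then verify that the interval for $c$ is nonempty. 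This makes the role of the fundamental inequalities concrete (each of the three inequalities involving $\widetilde{e}_f$ certifies one of the three upper bounds on $c$ dominates the lower bound), and it handles the converse uniformly by restricting the witness to $H$. What the paper's phrasing buys is brevity, since the geometric fact that no cell crosses $H$ immediately pairs up simplices on opposite sides; what your approach buys is a fully self-contained justification of the claim that the restriction of $\cT_{\hat G}$ to $H$ actually equals $\cT_G$, which the paper treats somewhat tacitly. One small remark: your reduction to maximal $S$ should be noted as applying only to the forward direction, since for the converse you in fact handle arbitrary $S$ directly by restricting the witness, which is what you do.
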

\begin{proof}
    First we observe that the intersection of the cosmological polytope $\cC_{\hat G}$ with the hyperplane $H=\smallSetOf{x\in\RR^{n+m}}{x_f = 0}$ is an embedding of the  polytope $\cC_G$ that we may identify with this cosmological polytope.
    This hyperplane $H$ separates $\widetilde{e}_f$ from the three points $\overleftarrow{e}_f$, $e_f$ and $\overrightarrow{e}_f$ while all other points lie on this hyperplane.
    We deduce that the lower-dimensional simplex $S'\in\cT_G$ which lies in $H$ is a face of the simplex $S'\cup\{\widetilde{e}_f\}$ and at least one simplex of the form $S'\cup\{p\}$ where $p\in\{
    e_f, \overleftarrow{e}_f, \overrightarrow{e}_f\}$. 
    
    Furthermore, the fundamental inequalities imply that no simplex $S$ in $\cT_{\hat G}$ contains any of the following three subsets of lattice points 
    $\{\widetilde{e}_f, e_f\},\, \{\widetilde{e}_f, \overleftarrow{e}_f\},\, \{\widetilde{e}_f, \overrightarrow{e}_f\}$.
    Thus the simplex $S$ lies entirely in $H$ or on one side of $H$. 
    We conclude that if a simplex $S\in \cT_{\hat G}$ includes exactly one vertex $p$ outside of $H$ then its face $S\setminus\{p\}$ is a simplex in $\cT_G$ which completes the proof.
\end{proof}

Note that for a maximal simplex $S$ in $\cT_G$ only one of the three sets  $S\cup\{e_f\}$, $S\cup\{\overrightarrow{e}_f\}$ or $S\cup\{\overleftarrow{e}_f\}$ forms a simplex in $\cT_{\hat G}$ as this simplex shares the facet $S$ with the maximal simplex $S\cup\{\widetilde{e}_f\}$.

Before we continue our journey of finding a way to enumerate all cells in good triangulations, we formulate a few useful statements and observations. We begin by considering the affine coordinates with respect to a maximal simplex.

\begin{lemma}\label{lem:coord}
    Let $S$ be a maximal simplex in $\cT$ and $w\in V$. Then there are unique numbers 
    $\lambda^w_u$ and $\lambda^w_f$ for $u\in V(S)$ and $f\in D(S)$ such that
    \begin{equation}\label{eqn:affine-coordinates}
        e_w \ = \ \sum_{u\in V(S)} \lambda^w_u e_u + \sum_{f=\{u,v\}\in D(S)} \lambda^w_f (e_u-e_v) \enspace.
    \end{equation}
\end{lemma}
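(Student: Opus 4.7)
The plan is to reduce both existence and uniqueness to the claim that the family
\[
\mathcal{B} = \{e_u : u \in V(S)\} \cup \{e_u - e_v : f = \{u,v\} \in D(S)\}
\]
is a basis of $\RR^V$; once this is established, the required expansion of $e_w$ exists and its coefficients $\lambda^w_u, \lambda^w_f$ are uniquely determined. First I would verify the cardinality: the $n+m$ vertices of the maximal simplex $S$ consist of one $e_u$ per selected node, one of $\widetilde{e}_f$, $e_f$, $\overrightarrow{e}_f$ or $\overleftarrow{e}_f$ per singly decorated edge, and two vertices per double edge (the selected $e_f$ together with one of $\overrightarrow{e}_f$, $\overleftarrow{e}_f$). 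Summing gives $|V(S)| + m + |D(S)| = n+m$, so $|\mathcal{B}| = n = \dim\RR^V$ and only linear independence remains to be checked.

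For linear independence I would exploit the affine independence of the $n+m$ vertices of $S$ in $\RR^{n+m}$: since they all lie in the hyperplane $\sum_i x_i = 1$, this is equivalent to invertibility of the $(n+m)\times(n+m)$ matrix $M'$ whose columns are those vertices. Perform the column operation $\overrightarrow{e}_f \mapsto \overrightarrow{e}_f - e_f$ (or the analogous one with $\overleftarrow{e}_f$) for every double edge $f \in D(S)$; this preserves $\det(M')$ and replaces the modified column by the vector $\pm(e_u - e_v)$ with vanishing $E$-component. After these operations, each of the $m$ edge-rows of $M'$ contains exactly one nonzero entry equal to $\pm 1$: a $-1$ in the $\widetilde{e}_f$-column when $f$ is squiggly, a $+1$ in the $e_f$-column when $f$ is selected-only or double, and a $+1$ in the remaining directed column when $f$ is directed-only. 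Hence these $m$ edge-rows together with the $m$ edge-columns they hit form a signed permutation block, and Laplace expansion along the edge-rows reduces $\det(M')$ to $\pm\det(M)$, where $M$ is the $n\times n$ submatrix whose columns, restricted to the $V$-rows, are (up to signs) precisely the vectors of $\mathcal B$. Non-vanishing of $\det(M')$ therefore forces $\det(M) \neq 0$, giving the basis property.

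The main obstacle is the bookkeeping in this reduction: one must verify carefully that after the column modifications the edge-rows truly decouple as a signed permutation with no spill-over into the $V$-block, and that the surviving $n$ columns restricted to the $V$-rows reproduce exactly the vectors in $\mathcal B$ up to sign. Beyond this verification the argument is routine linear algebra, and as a by-product the coefficients $\lambda^w_u$ and $\lambda^w_f$ can be read off as matrix entries of $M^{-1}$ applied to $e_w$.
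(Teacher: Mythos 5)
Your proposal is correct and takes essentially the same route as the paper: both rest on the affine independence of the $n+m$ vertices of the maximal simplex $S$ in the hyperplane $\sum x_i = 1$, and both conclude that the family $\{e_u : u \in V(S)\} \cup \{e_u - e_v : f \in D(S)\}$ carries the coordinates of $e_w$. The paper simply expands $e_w$ in the affine basis given by the vertices of $S$ and reads off the $e_f$-coordinates to kill or pair coefficients, whereas you repackage the same constraint as a block-triangular determinant argument; the two are equivalent, with yours making the basis property of $\mathcal B$ in $\RR^V$ a bit more explicit at the cost of extra bookkeeping.
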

\begin{proof}
    The $n+m$ vertices of the maximal simplex $S$ form an affine basis for the hyperplane $\smallSetOf{x\in\RR^{n+m}}{\sum x_i = 1}$ which contains the polytope $\cC_G$. Thus every point in $\RR^{n+m}$, and in particular $e_w$, can be expressed in a unique way as a linear combination of these vertices. That is $e_w = \sum_{p\in S} \lambda_p\, p$ for some $\lambda_p\in\RR$.

    For every edge $f\in E$, the $e_f$-coordinate of $e_w$ vanishes, thus $\lambda_p = 0$ whenever $p\in\{\widetilde{e}_f,\overleftarrow{e}_f, \overrightarrow{e}_f, e_f\}$ and $f\not\in D(S)$. Furthermore, for the same reason, if $f\in D(S)$ then $\lambda_{q}-\lambda_{e_f}=0$ for $q\in\{\overleftarrow{e}_f,\overrightarrow{e}_f\}\cap S$. As $\overleftarrow{e}_f-e_f = e_f-\overrightarrow{e}_f = e_u-e_v$ for $f=(u,v)$ we conclude that the linear combination has the desired form \eqref{eqn:affine-coordinates}.
\end{proof}

A consequence of the previous lemma is the following observation.
\begin{corollary}\label{cor:selecetd_nodes}
    Let $S$ be a maximal simplex in $\cT$, and $C$ be a connected component of the graph $G|_{D(S)}$ with vertices $V$ and edges $D(S)$.
    Then the decorated graph $G_S$ restricted to the component $C$ includes a unique selected node. 
\end{corollary}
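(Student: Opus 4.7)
The plan is to combine a count of the vertices of $S$ with the affine coordinate expansion of Lemma~\ref{lem:coord}. First, I would establish the identity $|V(S)| = n - |D(S)|$. Since $\cC_G$ has dimension $n+m-1$, the maximal simplex $S$ has exactly $n+m$ vertices. For each edge $f$, the fundamental inequalities forbid $\widetilde{e}_f$ from coexisting in $S$ with any of $e_f, \overleftarrow{e}_f, \overrightarrow{e}_f$, while the midpoint relation $e_f = \frac{1}{2}(\overleftarrow{e}_f + \overrightarrow{e}_f)$ together with the requirement that $\cT$ uses \emph{all} lattice points of $\cC_G$ rules out $\{\overleftarrow{e}_f, \overrightarrow{e}_f\} \subseteq S$. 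Hence each edge $f$ contributes either one vertex or exactly two to $S$, the latter precisely when $f \in D(S)$. Since $S$ is full-dimensional, $S$ cannot lie in the hyperplane $x_f = 0$, so each edge contributes at least one vertex. Summing yields $|V(S)| + m + |D(S)| = n + m$, whence $|V(S)| = n - |D(S)|$.

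Next, I would use Lemma~\ref{lem:coord} to show that every connected component $C$ of $G|_{D(S)}$ meets $V(S)$. Fix $w \in C$ and apply the linear functional $x \mapsto \sum_{u \in C} x_u$ to both sides of~\eqref{eqn:affine-coordinates}. Since every $f = \{u,v\} \in D(S)$ has both endpoints in a single component of $G|_{D(S)}$, each difference $e_u - e_v$ maps to $0$ under this functional. The left-hand side evaluates to $1$, so the identity
\[
\sum_{u \in V(S) \cap C} \lambda^w_u \ = \ 1
\]
forces $V(S) \cap C \neq \emptyset$.

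Finally, letting $c$ denote the number of components of $G|_{D(S)}$, summing $|V(S) \cap C| \geq 1$ over all components gives $|V(S)| \geq c$. A graph on $n$ nodes with $|D(S)|$ edges always satisfies $c \geq n - |D(S)|$, with equality exactly for forests. Combining with $|V(S)| = n - |D(S)|$ forces equalities throughout, so $|V(S) \cap C| = 1$ for every component $C$ (and, as a byproduct, $G|_{D(S)}$ is a forest).

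I expect the main obstacle to be the first step: ruling out $\{\overleftarrow{e}_f, \overrightarrow{e}_f\} \subseteq S$ requires careful use of the property that a good triangulation uses all lattice points, and verifying that each edge contributes at least one vertex needs the full-dimensionality of $S$. Once the count $|V(S)| = n - |D(S)|$ is established, Lemma~\ref{lem:coord} and a pigeonhole argument finish the proof.
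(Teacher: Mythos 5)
Your proof is correct and complete, but takes a genuinely different route from the paper. The paper's argument addresses only uniqueness: two selected nodes in a common component of $G|_{D(S)}$ would be joined by a path of double edges, which telescopes to a nontrivial relation among the vectors $e_u$ and $e_u - e_v$ appearing in \eqref{eqn:affine-coordinates}, contradicting the uniqueness of the coefficients asserted in Lemma~\ref{lem:coord}; existence is left implicit. You instead first establish the vertex count $|V(S)| = n - |D(S)|$ from the fundamental inequalities and the fact that the triangulation uses every lattice point, then prove existence directly by applying the functional $x\mapsto \sum_{u\in C} x_u$ to \eqref{eqn:affine-coordinates}, and finish by pigeonhole. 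What your route buys is an explicit existence argument and, as a byproduct, an independent re-derivation of the fact that $G|_{D(S)}$ is a forest, which the paper proves separately in Lemma~\ref{lem:cycle} by a convexity argument. The cost is the extra bookkeeping in the vertex count, in particular excluding $\{\overleftarrow{e}_f, \overrightarrow{e}_f\}\subseteq S$; your reasoning there is sound (the lattice point $e_f$ is the midpoint of that segment and is used by $\cT$, so it cannot lie in the relative interior of an edge of a cell), but the paper's uniqueness argument needs none of it.
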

\begin{proof}
    Suppose $w$ and $w'$ are two selected nodes in $G_S$, that is $w,w'\in V(S)$, and they lie in the same connected component, that is there is a path of double edges from $w$ to $w'$. This would result in a dependence in 
    \eqref{eqn:affine-coordinates} contradicting Lemma~\ref{lem:coord}. 
\end{proof}

\begin{remark} Alternatively Corollary~\ref{cor:selecetd_nodes} can be read off of the zig-zag binomials in the Gr\"obner basis the authors constructed in \cite{JuhkeSolusVenturello}.

Furthermore, it might be worth mentioning that
 the coefficients in \eqref{eqn:affine-coordinates} are elements in $\{-1,0,1\}$ which follows from Corollary~\ref{cor:selecetd_nodes} as they record if the path from $w$ to the unique selected node $v$ passes through the double edges.
 We also see that $\lambda_u^w= 1$ if $u\in V(S)$ is connected to $w$ via double edges and $\lambda_u^w=0$ otherwise. 
\end{remark}

Our next statements help us to understand the maximal simplices in a good triangulation. They are analogous to
 \cite[Lemma 4.5]{BruckampGoltermannJuhnkeLandinSolus} but they include all good triangulations and the lemma does not make any assumptions on the cosmological polytope.
\begin{lemma}\label{lem:cycle}
    Let $S$ be a simplex in the good triangulation $\cT$ of the polytope $\cC_G$, then the decorated graph $G_S$ does not contain a cycle of double edges.
\end{lemma}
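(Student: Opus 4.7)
The plan is to argue by contradiction using a telescoping affine dependence. Suppose $G_S$ contains a cycle of double edges $f_1,f_2,\ldots,f_k$, where $f_i$ joins $u_i$ and $u_{i+1}$ (indices taken modulo $k$). Since each $f_i$ is a double edge, the simplex $S$ contains both $e_{f_i}$ and exactly one of $\overleftarrow{e}_{f_i},\overrightarrow{e}_{f_i}$; denote this second vertex by $q_i$. From the definitions $\overleftarrow{e}_f-e_f=e_u-e_v$ and $\overrightarrow{e}_f-e_f=e_v-e_u$ we can write, after choosing the correct sign for each edge,
\[
q_i - e_{f_i} \ = \ \eta_i\bigl(e_{u_{i+1}}-e_{u_i}\bigr) \qquad \text{with } \eta_i \in \{+1,-1\} \enspace .
\]

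Next I would exploit the cycle structure to produce a telescoping sum. Multiplying each of the above identities by $\eta_i$ and summing over $i$, one obtains
\[
\sum_{i=1}^{k} \eta_i\bigl(q_i - e_{f_i}\bigr) \ = \ \sum_{i=1}^{k} \bigl(e_{u_{i+1}}-e_{u_i}\bigr) \ = \ 0 \enspace ,
\]
since the right-hand side telescopes around the cycle. Rearranging this as $\sum_i \eta_i q_i + \sum_i (-\eta_i)e_{f_i}=0$ yields an affine relation on the $2k$ lattice points $\{q_i,e_{f_i}\}_{i=1}^{k}\subseteq S$, because the coefficients sum to $\sum_i(\eta_i-\eta_i)=0$. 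Every coefficient equals $\pm 1$, and the $2k$ points are pairwise distinct (the $e_{f_i}$ are distinct since the cycle has distinct edges, the $q_i$ are distinct for the same reason, and $q_i \neq e_{f_j}$ as they lie on different coordinate hyperplanes among $\{x_{f_i}=0\}$). Thus the relation is a nontrivial affine dependence among vertices of $S$, which contradicts $S$ being a simplex.

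There is essentially no obstacle beyond bookkeeping the signs $\eta_i$: the key observation is that squaring $\eta_i$ to $1$ is precisely what makes the cycle sum collapse. It is worth noting that the argument does not invoke the fundamental inequalities or the cycle inequalities, so affine independence alone of the vertices of $S$ suffices; the role of the triangulation being good will enter later when counting and identifying the maximal simplices via decorations.
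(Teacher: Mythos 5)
Your proof is correct and takes essentially the same approach as the paper: both exploit the telescoping of the differences $e_{u_{i+1}}-e_{u_i}$ around the cycle to produce a relation among the $2k$ lattice points $\{q_i, e_{f_i}\}\subseteq S$ that contradicts affine independence. The paper phrases the contradiction as a point with two distinct convex representations by vertices of $S$, while you extract the affine dependence $\sum_i \eta_i q_i - \sum_i \eta_i e_{f_i} = 0$ directly; these are equivalent formulations, with your sign bookkeeping $\eta_i$ playing the role of the paper's partition of the cycle into $C_1$ and $C_2$.
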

\begin{proof}
    We show this statement by contradiction.
    For this purpose suppose that $S$ is a simplex and $C = \{(u_1,u_2),\ldots,(u_{k-1},u_k),(u_k,u_1)\}\subseteq E$ is the underlying cycle of edges in $G$ for a cycle of double edges in the decorated graph $G_S$.
    The edges in $C$ are partitioned into the two sets
    \[
        C_1 = \SetOf{f\in C}{\overleftarrow{e}_f\in S} \text{ and }
        C_2 = \SetOf{f\in C}{\overrightarrow{e}_f\in S} \enspace .
    \]
    The point
    \[
        x\ = \ \frac{1}{|C|} \sum_{f\in C_1} \overleftarrow{e}_{f} + \frac{1}{|C|} \sum_{f\in C_2} e_f \ = \ \frac{1}{|C|} \sum_{f\in C_1} e_{f} + \frac{1}{|C|} \sum_{f\in C_2}\overrightarrow{e}_{f} 
    \]
    is the convex combination of two distinct sets of vertices in $S$. Thus this point is a witness that $S$ cannot be a simplex, contradicting our assumption.
\end{proof}

Before we discuss the general case let us consider the cosmological polytope of a forest.

\begin{proposition}\label{prop:facets_trees} Let $G=(V,E)$ be a forest and $\cT$ a good triangulation of the cosmological polytope $\cC_G$. Then for any choice of squiggly and double edges (with orientations) there exists a unique maximal simplex $S$ in $\cT$ for which $G_S$ has exactly the chosen decorated edges.
\end{proposition}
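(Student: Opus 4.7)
\emph{Proof plan.} I would proceed by strong induction on the number of edges $m$. The base case $m = 0$ is immediate: $\cC_G$ is the standard simplex and $\cT$ has a single maximal simplex, which corresponds to the only possible choice $(\widetilde{E}, D) = (\emptyset, \emptyset)$. For the inductive step I pick a leaf edge $f = \{u, v\}$ of $G$, with $u$ the leaf vertex, and set $G' = G - f$. Restricting the height function $\psi$ of $\cT$ to $\cC_{G'} \cap \ZZ^{n+m}$ preserves the fundamental inequalities, so the induced triangulation $\cT'$ of $\cC_{G'}$ is again a good triangulation and the inductive hypothesis applies. Given a choice $(\widetilde{E}, D)$ in $G$, its restriction $(\widetilde{E}', D')$ to $G'$ yields by induction a unique maximal simplex $S' \in \cT'$ with decoration $(\widetilde{E}', D')$, and Corollary~\ref{cor:selecetd_nodes} gives $e_u \in S'$ because $u$ is isolated in $G'$.

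If $f$ is squiggly or simply decorated, Lemma~\ref{lem:tool} and the remark following it directly yield the unique maximal extension of $S'$ in $\cT$: namely $S = S' \cup \{\widetilde{e}_f\}$ in the former case, and $S = S' \cup \{p\}$ for the unique $p \in \{e_f, \overleftarrow{e}_f, \overrightarrow{e}_f\}$ making the extension maximal in the latter. Uniqueness follows by applying the inductive hypothesis to the maximal simplex $S \setminus \{\widetilde{e}_f\}$ or $S \setminus \{p\}$ of $\cT'$.

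The delicate case is when $f$ is a double edge, say $\overleftarrow{f} \in D$. For \emph{uniqueness}, I would argue by an affine ``same-side'' argument. If two maximal simplices $S_1 \ne S_2$ in $\cT$ had the same decoration, their shared face $F = S_1 \cap S_2$ would contain, for every double edge $h \in D$, both $e_h$ and its paired directed vertex. By Lemma~\ref{lem:coord}, the difference of any pair of competing vertices (a selected node versus an alternative, or $e_g$ versus $\overleftarrow{e}_g$ for a simply-decorated edge $g$) can be written as a signed combination of the vectors $\overleftarrow{e}_h - e_h$ and $\overrightarrow{e}_h - e_h$ for $h \in D$, all parallel to $\operatorname{aff}(F)$. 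This forces the competing vertices of $S_1$ and $S_2$ onto the same side of $\operatorname{aff}(F)$, contradicting $S_1 \ne S_2$ being distinct maximal cells of the triangulation meeting along $F$.

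For \emph{existence}, the natural candidate is $S = (S' \setminus \{e_w\}) \cup \{e_f, \overleftarrow{e}_f\}$ for an appropriate node $w$ to remove. The fundamental inequality $\psi(e_u) + \psi(e_f) \leq \psi(\overleftarrow{e}_f) + \psi(e_v)$ forbids $e_v$ from any simplex of $\cT$ that contains $\{e_f, \overleftarrow{e}_f\}$; combined with the analogous inequalities for the other double edges in the component of $f$ in $G|_D$ and Corollary~\ref{cor:selecetd_nodes}, this pins down the unique selected node of the merged component and hence the vertex $e_w$ to remove from $S'$. The main obstacle is verifying that $S$ really is a cell of $\cT$. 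This can be tackled either directly, by producing an affine functional witnessing $S$ as a lower face of the $\psi$-lifted polytope (using the fundamental inequalities as building blocks), or indirectly, by using the uniqueness statement together with an independent count to show that $|\cT| = 4^m$ for a forest, matching the number of possible decorations bijectively.
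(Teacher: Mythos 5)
Your proposal takes a genuinely different route from the paper. The paper first reduces (via Lemma~\ref{lem:tool}) to the case where every edge is squiggly or a double edge, then analyzes the explicit face $F = \cC_G \cap \{\sum_{f\in D} x_f - \sum_{f\in E_1} x_f = 1\}$ as an iterated pyramid/bipyramid over $\widehat{S}$, uses unimodularity of $\cT$ to count the $2^{|D|}$ cells of $F$, and finishes with Corollary~\ref{cor:selecetd_nodes}. You instead propose induction on edges by stripping off a leaf edge, which is closer in spirit to the proof the paper later gives for Theorem~\ref{thm:main}. Your squiggly and simply decorated cases are fine and follow directly from Lemma~\ref{lem:tool}. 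However, the double-edge case — which you correctly identify as the delicate one — has genuine gaps.

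\emph{Existence.} The claim that the fundamental inequalities ``pin down the unique selected node'' of the merged component is false. Take the path $u-v-z$ with $f=uv$, $g=vz$, both double, with orientations $\overleftarrow{e}_f$ and $\overrightarrow{e}_g$ (so both arrows point away from $v$). The fundamental inequalities forbid only $e_v$; they leave both $u$ and $z$ as candidates for the selected node. Which one is actually selected is governed by the sign of $\psi(e_u)+\psi(e_f)+\psi(\overrightarrow{e}_g) - \psi(e_z)-\psi(\overleftarrow{e}_f)-\psi(e_g)$, a comparison not covered by the fundamental inequalities, and both signs occur for valid good height functions. In your inductive step this shows up concretely: with $w'\neq v$ selected in $v$'s component of $S'$, the fundamental inequalities do not decide whether to drop $e_u$ or $e_{w'}$ from $S'$. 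So the candidate $S$ is not determined, and you still have to prove one (and exactly one) of the resulting sets is a cell of $\cT$. Of your two suggested fixes, the affine-functional route is not spelled out, and the counting route ($|\cT|=4^m$ for forests) would be circular here: that count is obtained in the paper only as a consequence of this very proposition (via Theorem~\ref{thm:hstar} and Corollary~\ref{cor:volume}).

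\emph{Uniqueness.} Your ``same-side'' argument contains an error in the claim that for a simply decorated edge $g=\{u_1,u_2\}$ the difference of competing vertices (e.g.\ $e_g - \overleftarrow{e}_g = e_{u_2}-e_{u_1}$) lies in the span of $\{\overleftarrow{e}_h-e_h,\,\overrightarrow{e}_h-e_h : h\in D\}$. In a forest, $u_1$ and $u_2$ lie in different components of $G|_D$ (since $g\notin D$ and removing $g$ disconnects them), so this vector is not a combination of double-edge differences and need not be parallel to $\operatorname{aff}(F)$. The argument works cleanly only for the difference of two competing selected nodes within one component of $G|_D$; to make it apply to the simply decorated edges as well you would need to first perform the reduction the paper makes (swap all simply decorated edges to squiggly via Lemma~\ref{lem:tool}) so that no such edges remain.

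In short, the inductive skeleton is a reasonable alternative and the easy cases are handled correctly, but the core step — existence and uniqueness when a leaf edge becomes a double edge — is exactly what the paper's face-of-$\cC_G$ and unimodularity argument is designed to handle, and your sketch does not yet close that gap.
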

\begin{proof}
    We claim that it is enough to prove that for any partition $E_1 \cup E_2 \cup E_3$ of  the edge set~$E$, there is a unique maximal simplex $S\in\cT$ with $\widetilde{E}(S) = E_1$, $\overleftarrow{E}(S) = E_2$ and $\overrightarrow{E}(S)=E_3$.
    This is because by Lemma~\ref{lem:tool}, we are allowed to swap any simply decorated edge of $G_S$ to a squiggly edge, whereby the simplex remains maximal.
    
    Thus let $E_1\cup E_2\cup E_3=E$ be a such a partition and $D=E_2\cup E_3$. As the graph~$G$ is cycle free and we have selected no vertex yet there is exactly one simplex $S' \in \cT$ with $|E_1|+2|D|$ many elements for which $\widetilde{E}(S') = E_1$, $\overleftarrow{E}(S') = E_2$ and $\overrightarrow{E}(S')=E_3$.  
    We point out that the simplex $S'$ lies in the face 
    \begin{align*}
            F\ &=\ \cC_G \cap \midSetOf{x\in\RR^{n+m}}{\sum_{f\in D} x_f - \sum_{f\in E_1} x_f=1}\\
            &=\ \conv(\SetOf{\widetilde{e}_f}{f \in E_1}\cup\SetOf{\overrightarrow{e}_f, \overleftarrow{e}_f}{f\in D}
    \end{align*}
    of the cosmological polytope $\cC_G$. 
    The face $F$ can be constructed by taking $|D|$ many bipyramids over the simplex $\widehat{S}=\smallSetOf{e_f}{f\in D}$ with apices $\overleftarrow{e}_f$ and $\overrightarrow{e}_f$, and afterwards taking multiple pyramids with apices $\widetilde{e}_f$ for each $f\in E_1$.

    Since the good triangulation $\cT$ is unimodular we conclude that $\cT$ restricted to $F$ consists of $2^{|D|}$ cells of dimension $|E_1|+2|D|$ all of which do meet in $\widehat{S}$. These cells are determined by the choice of which of the two points $\overleftarrow{e}_f$ and $\overrightarrow{e}_f$ is in that cell.
    One of these cells is the simplex $S'$ which therefore is a cell in $\cT$.
    The cell $S'$ is contained in some maximal simplex $S\in\cT$.
    
    As for any edge $f \in E$ either $\widetilde{e}_f \in S'$, $\{\overrightarrow{e}_f, e_f\} \subseteq S'$ or $\{\overleftarrow{e}_f,e_f\} \subseteq S'$, the fundamental inequalities imply that no more edges can be added and thus
    \[S = S' \cup \SetOf{e_v}{v \in V'} \text{ for some } V' \subseteq V.\]
    We are left with the task to show uniqueness of this maximal simplex~$S$. It follows from Corollary~\ref{cor:selecetd_nodes} that if we consider the subgraph of $G_S$ induced by the double edges $D(S)$, then every component has a unique vertex $v$ such that $e_v \in S$. 
    Thus there is exactly one maximal simplex $S$ with the described decorated edges.
\end{proof}

Now we are prepared to extend the statement of Proposition~\ref{prop:facets_trees} to all graphs.
\begin{theorem}\label{thm:main} Let $\cT$ be a good triangulation of $\cC_G$.
    For any choice of squiggly and cycle free double edges there exists a unique maximal simplex $S\in\cT$ for which the graph~$G_S$ has the chosen decorated edges. Moreover, there are no other maximal simplices in~$\cT$.
\end{theorem}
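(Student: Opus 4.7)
The plan is to reduce the statement to Proposition~\ref{prop:facets_trees} by first passing to the forest of double edges and then rebuilding the simplex one edge at a time via Lemma~\ref{lem:tool}.

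Given a choice of squiggly edges $E_s$ and cycle-free double edges $D$ (with orientations), let $G_D = (V, D)$; this is a forest. The proof of Lemma~\ref{lem:tool} shows that no simplex of $\cT$ is split by any hyperplane $\{x_f = 0\}$ with $f \notin D$, so iterating these restrictions produces a genuine triangulation $\cT_{G_D}$ of $\cC_{G_D}$. This restricted triangulation inherits regularity from $\cT$ and contains the standard simplex of $G_D$ as a maximal cell (obtained as the intersection of the standard simplex of $G$ with the relevant hyperplanes), so it is a good triangulation. Applying Proposition~\ref{prop:facets_trees} with the choice (squiggly $=\emptyset$, double $=D$) then yields a unique maximal simplex $S_0 \in \cT_{G_D}$ in which every edge of $D$ is double with the prescribed orientation.

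I next enlarge $S_0$ to a maximal simplex of $\cT$ by processing the edges of $E \setminus D$ in any order, invoking Lemma~\ref{lem:tool} at each step. For each $f \in E_s$ I append $\widetilde{e}_f$; for each $f \in E \setminus (E_s \cup D)$ I append the unique vertex of $\{e_f, \overleftarrow{e}_f, \overrightarrow{e}_f\}$ that completes the current simplex to a maximal simplex of the next larger triangulation, which is well defined by the remark following Lemma~\ref{lem:tool}. The resulting simplex $S \in \cT$ is maximal and has squiggly set exactly $E_s$ and double set exactly $D$, which proves existence.

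Uniqueness follows by reversing the construction: a second maximal simplex $S'$ with the same squiggly and double decoration restricts to a maximal simplex of $\cT_{G_D}$ carrying the decoration (squiggly $=\emptyset$, double $=D$), hence equals $S_0$ by Proposition~\ref{prop:facets_trees}, and the remaining vertices of $S'$ are then pinned down edge by edge by the same Lemma~\ref{lem:tool} argument, giving $S = S'$. The completeness claim (no other maximal simplices) is immediate from Lemma~\ref{lem:cycle}, since the double edges of any maximal simplex of $\cT$ form a cycle-free set and thus correspond to a valid choice. The step I expect to require the most care is verifying that $\cT_{G_D}$ really is a good triangulation to which Proposition~\ref{prop:facets_trees} applies, in particular that the standard simplex of $G_D$ survives as a maximal cell through the iterated restrictions and that the restricted height function still satisfies the fundamental inequalities for $G_D$.
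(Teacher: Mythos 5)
Your proof is correct and takes essentially the same approach as the paper's: both deduce the theorem from Proposition~\ref{prop:facets_trees} and Lemma~\ref{lem:cycle} by repeatedly deleting non-double edges via Lemma~\ref{lem:tool}, and the concern you flag (that the iterated restriction is still a good triangulation) is implicitly needed in the paper's induction as well and goes through by the reasoning already present in the proof of Lemma~\ref{lem:tool}. The only structural difference is that the paper packages this as a minimal-counterexample argument that deletes a single non-double edge from a cycle and invokes the theorem for the smaller graph, whereas you unroll the induction into an explicit forward construction that strips all the way down to the double-edge forest $G_D$ and then rebuilds one edge at a time.
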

\begin{proof}
Suppose $G$ is a graph with the smallest number of edges such that the claim of the statement is false.
That is either there is no simplex $S$ corresponding to a given decoration of edges or there are at least two simplices $S$ and $S'$ with the same edge decoration. In any of those cases, $G$ must contain a cycle $C$ by Proposition~\ref{prop:facets_trees}.
We deal first with the assumption that for a given edge decoration there is no simplex $S$ of which $G_S$ has the described edge decoration. But one of the edges, say $f$, in $C$ must be a squiggly or a simply decorated edge by Lemma~\ref{lem:cycle}, and thus Lemma~\ref{lem:tool} applies. If we remove $f$ from $G$ we find a simplex with the correct edge decoration on the remaining edges which we can complete to a maximal simplex in $\cT$ with the required decoration by applying Lemma~\ref{lem:tool} once more. This is a contradiction to our claim thus such a simplex must exist.
Similarly, if two simplices $S$ and $S'$ in $\cT$ have the same edge decoration, then in both of them there must be an edge $f\in C$ that is not a double edge. We can apply Lemma~\ref{lem:tool} once more and delete that edge to obtain a contradiction as $S\setminus\{e_f, \widetilde{e}_f, \overleftarrow{e}_f, \overrightarrow{e}_f\} = S'\setminus\{e_f, \widetilde{e}_f, \overleftarrow{e}_f, \overrightarrow{e}_f\}$ and thus $S=S'$.
Finally Lemma~\ref{lem:cycle} shows that there are no other simplices in $\cT$ than those described in the statement.
\end{proof}

\section{A decomposition into half-open simplices}\label{sec:decomp}
\noindent In this section we aim to construct from a good triangulation $\cT$ of the cosmological polytope $\cC_G$ a half-open decomposition to draw conclusions about the
$h^\ast$-polynomial of the cosmological polytope which agrees with the face-counting $h$-polynomial of the unimodular triangulation $\cT$. 

A set $Q$ is called a \emph{half-open polytope} if the euclidean closure $\overline{Q}$ is a polytope and
\[
   Q \ = \ \overline{Q}\setminus \bigcup_{F\in B} F
\]
for some family $B$ of facets of $\overline{Q}$.
A \emph{half-open decomposition} of a polytope $P$ is a collection of pairwise disjoint half-open polytopes that cover the polytope $P$.

To construct the desired half-open decomposition, we first have to analyze the \emph{dual graph} of the triangulation $\cT$, i.e., the graph whose nodes are the maximal simplices ${S\in\cT}$ and two nodes are connected by an edge whenever the simplices share a codimension-$1$ face.

The following slightly more technical definition is the main tool to construct the desired half-open decomposition.
\begin{definition}\label{def:anchored_path}
   We call a path $S_1,\ldots, S_k$ with at least two nodes 
   in the dual graph of $\cT$ \emph{anchored} if 
   \begin{enumerate}[i)]
   	\item $S_1$ has a smaller number of squiggly and double edges than $S_2$,
	\item\label{it:anchored} for every $1 < i < k$ there is an edge $f\in E$ such that $e_f, p\in S_i$ for $p\in\{\overrightarrow{e}_f, \overleftarrow{e}_f\}$, as well as 
$S_i\setminus \{p\}$ and $S_i\setminus \{e_f\}$ are each a face of one of the two simplices $S_{i-1}$ and $S_{i+1}$.
   \end{enumerate}
\end{definition}

\begin{figure}
   \begin{subfigure}[c]{0.49\textwidth}
   \begin{tikzpicture}[scale = 0.7,
                    color = {black}]

  \tikzset{->-/.style={decoration={ markings, mark=at position #1 with {\arrow{>}}},postaction={decorate}}}
  
  \definecolor{myblue}{rgb}{0,0.2,0.7}
  \tikzstyle{linestyle} = [ultra thick, line cap=round, line join=round];
  \tikzstyle{linestyle2}= [decorate, decoration=snake, ultra thick, line cap=round, line join=round]
  \tikzstyle{linestyle3} = [->-=0.8,> = stealth, ultra thick, line cap=round, line join=round];
  \tikzstyle{linestyleX} = [dotted, line cap=round, line join=round];
  \tikzstyle{nodestyle} = [draw=black, thick, circle, fill=white, inner sep=1.5pt];
  \tikzstyle{nodestyle2} = [draw=black, thick, circle, fill=black, inner sep=1.5pt];
  \tikzstyle{nodestyle3} = [draw=none, fill=white, circle, inner sep=0pt];
  \tikzstyle{nodestyleT} = [draw=none, fill=white, circle, inner sep=0pt];

  \draw[ultra thick, white] (-3,-3) -- (3,3);
  
  \newcommand{\graphx}[5]{
    \coordinate (t) at (#1,#2);
    \coordinate (u) at ($(-1, 0)+(t)$);
    \coordinate (v) at ($(0, 0)+(t)$);
    \coordinate (w) at ($(1, 0)+(t)$);

    \ifthenelse{#3=0}{
        \draw[linestyle] (u) to (v);
    }{
    \ifthenelse{#3=1}{
        \draw[linestyle2] (u) to (v);
    }{
    \ifthenelse{#3=2}{
        \draw[linestyle3] (v) to[bend right=45] (u);
        \draw[linestyle] (v) to (u);
    }{
    \ifthenelse{#3=4}{
        \draw[linestyle3] (v) to[bend right=45] (u);
        \draw[linestyleX] (u) to (v);
    }{
        \draw[linestyle3] (u) to[bend left=45] (v);
        \draw[linestyle] (u) to (v);
    }
    }
    }
    }
    \ifthenelse{#4=0}{
        \draw[linestyle] (v) to (w);
    }{
    \ifthenelse{#4=1}{
        \draw[linestyle2] (v) to (w);
    }{
    \ifthenelse{#4=2}{
        \draw[linestyle3] (w) to[bend right=45] (v);
        \draw[linestyle] (v) to (w);
    }{
        \draw[linestyle3] (v) to[bend left=45] (w);
        \draw[linestyle] (v) to (w);
    }
    }
    }
    \ifthenelse{#3=2}{
        \ifthenelse{#4=3}{
        \node[nodestyle] at (u) {};
        }{
        \node[nodestyle2] at (u) {};
        }
        \node[nodestyle] at (v) {};
    }{
    \ifthenelse{#3=3}{
        \ifthenelse{#4=3}{
        }{
        \node[nodestyle2] at (v) {};
        }
        \node[nodestyle] at (u) {};
    }{
        \node[nodestyle2] at (u) {};
    }
    }
    \ifthenelse{#4=2}{
        \ifthenelse{#3=0 \or #3=1}{
            \node[nodestyle2] at (v) {};
        }{}
        \node[nodestyle] at (w) {};
    }{
    \ifthenelse{#4=3}{
        \node[nodestyle] at (v) {};
        \node[nodestyle2] at (w) {};
    }{
        \ifthenelse{#3=0 \or #3=1}{
            \node[nodestyle2] at (v) {};
        }{}
        \node[nodestyle2] at (w) {};
    }
    }

    \node[nodestyleT] at ($(-1.3, 0.5)+(t)$) {\tiny #5)};
  }

  \graphx{-3.9}{1.5}{0}{0}{1}
  \graphx{-1.3}{1.5}{1}{0}{2}
  \graphx{ 1.3}{1.5}{2}{0}{3} 
  \graphx{ 3.9}{1.5}{3}{0}{4} 

  \graphx{-3.9}{ 0}{0}{1}{5}
  \graphx{-1.3}{ 0}{1}{1}{6}
  \graphx{ 1.3}{ 0}{2}{1}{7}
  \graphx{ 3.9}{ 0}{3}{1}{8}
  
  \graphx{-3.9}{-1.5}{0}{2}{9}
  \graphx{-1.3}{-1.5}{1}{2}{10}
  \graphx{ 1.3}{-1.5}{2}{2}{11}
  \graphx{ 3.9}{-1.5}{3}{2}{12}

  \graphx{-3.9}{-3}{4}{3}{13}
  \graphx{-1.4}{-3}{1}{3}{14}
  \graphx{ 1.3}{-3}{2}{3}{15}
  \graphx{ 3.9}{-3}{3}{3}{16}

\end{tikzpicture}
      \subcaption{The sixteen decorated graphs of $\cT$.\label{subfig:dec2}}    
   \end{subfigure}
   \begin{subfigure}[c]{0.49\textwidth}
   \begin{tikzpicture}[x  = {(1cm,-.2cm)},
                    y  = {(0.3cm,1cm)},
                    z  = {(0cm,1cm)},
                    scale = 0.8,
                    label distance=2,
                    color = {black}]

  \tikzset{->-/.style={decoration={ markings, mark=at position #1 with {\arrow{>}}},postaction={decorate}}}
  
  \definecolor{myblue}{rgb}{0,0.2,0.7}
  
  \tikzstyle{preaction} = [draw=white, line cap=round,  line join=round, line width=2.7pt];
  \tikzstyle{linestyle} = [->-=0.86,> = stealth, ultra thick, line cap=round, line join=round, fill=none, draw=black, line width=1.5 pt];
  \tikzstyle{nodestyle} = [draw=black, thick, circle, fill=black, inner sep=1.5pt];
  \tikzstyle{nodestyleT} = [draw=none, fill=none, inner sep=0pt];

  \draw[ultra thick, white] (-3,-3,0) -- (2.5,3,0);
  
  \coordinate (v0) at (   0,    0,   0);
  \coordinate (v1) at (-1.2, -1.2,   0);
  \coordinate (v2) at (   0,    0, 1.4);
  \coordinate (v3) at ( 1.2,  1.2,   0);
  \coordinate (v4) at (-1.2,  1.2,   0);
  \coordinate (v5) at ($(v1)+(v4)$);
  \coordinate (v6) at ($(v2)+(v4)$);
  \coordinate (v7) at ($(v3)+(v4)$);
  \coordinate (v8) at ( 1.2, -1.2,   0);
  \coordinate (v9) at ($(v1)+(v8)$);
  \coordinate (v10) at ($(v2)+(v8)$);
  \coordinate (v11) at ($(v3)+(v8)$);
  \coordinate (v13) at ($2.2*(v2)+0.6*(v1)$);
  \coordinate (v12) at ($(v1)+(v13)-(v2)$);
  \coordinate (v14) at ($1.8*(v2)+0.6*(v3)$);
  \coordinate (v15) at ($(v3)+(v14)-(v2)$);

  \draw[linestyle] (v0) -- (v1);
  \draw[linestyle] (v0) -- (v3);
  \draw[linestyle] (v0) -- (v4);
  \draw[linestyle] (v0) -- (v8);
  \draw[linestyle] (v3) -- (v7);
  \draw[linestyle] (v4) -- (v7);
  \draw[linestyle] (v4) -- (v5);
  \draw[linestyle] (v1) -- (v5);
  \draw[linestyle] (v3) -- (v11);
  \draw[linestyle] (v8) -- (v11);
  \draw[linestyle] (v1) -- (v9);
  \draw[linestyle] (v8) -- (v9);

  \draw[linestyle] (v4) -- (v6);
  \draw[linestyle] (v0) -- (v2);
  \draw[linestyle] (v8) -- (v10);
  \draw[preaction] (v2) -- (v6);
  \draw[preaction] (v2) -- (v10);
  \draw[linestyle] (v2) -- (v6);
  \draw[linestyle] (v2) -- (v10);
  
  \draw[preaction] (v3) -- (v15);
  \draw[linestyle] (v3) -- (v15);
  \draw[preaction] (v14) -- (v15);
  \draw[linestyle] (v14) -- (v15);
  \draw[preaction] (v2) -- (v14);
  \draw[linestyle] (v2) -- (v14);
  
  \draw[preaction] (v2) -- (v13);
  \draw[linestyle] (v2) -- (v13);
  \draw[preaction] (v1) -- (v12);
  \draw[linestyle] (v1) -- (v12);
  \draw[preaction] (v13) -- (v12);
  \draw[linestyle] (v13) -- (v12);
  
  \draw[preaction] (v13) -- (v14);
  \draw[linestyle] (v13) -- (v14);
  
  \node[nodestyleT] at ($(v0)-(0, .3,0)$) {\tiny $1$};
  \node[nodestyleT] at ($(v1)-(0, .3,0)$) {\tiny $2$};
  \node[nodestyleT] at ($(v2)-(-0.3,  0,0)$) {\tiny $3$};
  \node[nodestyleT] at ($(v3)-(0, .3,0)$) {\tiny $4$};
  \node[nodestyleT] at ($(v4)-(0, .3,0)$) {\tiny $5$};
  \node[nodestyleT] at ($(v5)-(0.3, 0,0)$) {\tiny $6$};
  \node[nodestyleT] at ($(v6)-(0,-.3,0)$) {\tiny $7$};
  \node[nodestyleT] at ($(v7)-(0, .3,0)$) {\tiny $8$};
  \node[nodestyleT] at ($(v8)-(0, .3,0)$) {\tiny $9$};
  \node[nodestyleT] at ($(v9)-(0, .3,0)$) {\tiny $10$};
  \node[nodestyleT] at ($(v10)-(-0.3, 0,0)$) {\tiny $11$};
  \node[nodestyleT] at ($(v11)-(0, .3,0)$) {\tiny $12$};
  \node[nodestyleT] at ($(v12)-(0, .3,0)$) {\tiny $14$};
  \node[nodestyleT] at ($(v13)-(0.3, 0,0)$) {\tiny $13$};
  \node[nodestyleT] at ($(v14)-(0,-.3,0)$) {\tiny $15$};
  \node[nodestyleT] at ($(v15)-(0,-.3,0)$) {\tiny $16$};

  \node[nodestyle] at (v0) {};
  \node[nodestyle] at (v1) {};
  \node[nodestyle] at (v2) {};
  \node[nodestyle] at (v3) {};
  \node[nodestyle] at (v4) {};
  \node[nodestyle] at (v5) {};
  \node[nodestyle] at (v6) {};
  \node[nodestyle] at (v7) {};
  \node[nodestyle] at (v8) {};
  \node[nodestyle] at (v9) {}; 
  \node[nodestyle] at (v10) {}; 
  \node[nodestyle] at (v11) {};
  \node[nodestyle] at (v12) {}; 
  \node[nodestyle] at (v13) {}; 
  \node[nodestyle] at (v14) {}; 
  \node[nodestyle] at (v15) {}; 

\end{tikzpicture}
      \subcaption{The (oriented) dual graph of $\cT$.\label{subfig:dual_graph}}
   \end{subfigure}
   \caption{The good triangulation $\cT$ and the dual graph of Example~\ref{ex:ex2}.\label{fig:ex2}}
\end{figure}
 \begin{example}\label{ex:ex2}
    Consider the connected graph $G$ on the three nodes $u$, $v$, $w$ with two edges $f=(u,v)$ and $f'=(v,w)$.
    We consider the good triangulation for which Figure~\ref{subfig:dec2}) shows all sixteen decorated graphs of maximal simplices.
    For example the pair $S_9$, $S_{10}$ or $S_{13}$, $S_{15}$ are anchored paths as well as 
    $S_1, S_3, S_{13}$ and $S_{13}, S_{15}, S_{16}$, but $S_{15}, S_{16}$ is not an anchored path. Figure~\ref{subfig:dual_graph}) shows the dual graph oriented along the anchored paths.
\end{example}

\begin{remark}
    It is worth  mentioning that the chosen order of maximal simplices is a shelling order, but not the linear shelling order which is induced by the height function~$\psi$ that induces the triangulation. To obtain this shelling order for Example~\ref{ex:ex2} one has to swap the order of simplex $S_{13}$ and $S_{14}$.
\end{remark}

The next step is to construct half-open simplices from the anchored paths. For a maximal simplex $S\in\cT$ let us consider all anchored paths ending in $S$ and the set 
\[
     B(S) \ =\ \SetOf{S_{k-1}\cap S}{S_1,\ldots,S_k \text{ is an anchored path with $S_k=S$}} \enspace .
\]
Now we denote by $\Delta(S)$ the half-open simplex
\[
\Delta(S) \ = \ \conv(S) \setminus \bigcup_{F\in B(S)} F \enspace .
\]

\begin{lemma}\label{lem:num_anchored_paths} Let $S\in\cT$ be maximal. For every squiggly and double edge in $G_S$ there exists an anchored path ending in $S$.
\end{lemma}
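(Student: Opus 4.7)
The proof distinguishes the two types of edges.

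For a squiggly edge $f$ in $G_S$ I would take the anchored path to have length two. Let $S_1$ be the unique maximal simplex of $\cT$ sharing the facet $S\setminus\{\widetilde{e}_f\}$ with $S$. By Lemma~\ref{lem:tool} applied to the subgraph obtained from $G$ by removing $f$, the vertex $q$ distinguishing $S_1$ from $S$ lies in $\{e_f,\overrightarrow{e}_f,\overleftarrow{e}_f\}$, so in $G_{S_1}$ the edge $f$ is either selected or simply directed and in particular is neither squiggly nor double. All other edges and selected nodes of $G_{S_1}$ agree with those of $G_S$, hence the total count of squiggly and double edges strictly drops, verifying condition~(i); condition~(ii) is vacuous for a length-two path.

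For a double edge $f$, with $e_f,p\in S$ and $p\in\{\overrightarrow{e}_f,\overleftarrow{e}_f\}$, the plan is to build the path iteratively, always pivoting around a current focus double edge. Setting $S_k:=S$ and $f^{(0)}:=f$, I would inductively let $S_{k-i}$ be the maximal simplex sharing with $S_{k-i+1}$ a pivot facet of the focus $f^{(i-1)}$, namely one of $S_{k-i+1}\setminus\{e_{f^{(i-1)}}\}$ or $S_{k-i+1}\setminus\{p^{(i-1)}\}$ that is internal to $\cT$. A careful case analysis based on Corollary~\ref{cor:selecetd_nodes} and Lemma~\ref{lem:cycle} restricts the added vertex $q$ to one of: (a) a node vertex $e_w$ in the component orphaned by removing $f^{(i-1)}$ from the double-edge subgraph, in which case the squiggly--double count of $S_{k-i}$ is strictly smaller than that of $S_{k-i+1}$ and the construction terminates with $k-i=1$; (b) the opposite directed vertex of $f^{(i-1)}$, giving a direction swap with the count unchanged; or (c) a vertex promoting some other edge $g$ to a new double edge bridging the split components, in which case one sets $f^{(i)}:=g$ and iterates. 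The squiggly-vertex outcome $q=\widetilde{e}_g$ is ruled out by Corollary~\ref{cor:selecetd_nodes}, since it would leave the orphaned component without a selected node.

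Condition~(ii) at each intermediate $S_{k-i+1}$ is verified with pivot edge $f^{(i-1)}$: the shared facet with $S_{k-i+2}$ is one of the two pivot facets of $f^{(i-1)}$, and the facet shared with $S_{k-i}$ is the other. Condition~(i) follows from the terminal case~(a). Termination holds because the construction visits pairwise distinct simplices (it is a path in the dual graph), and $\cT$ is finite. The principal obstacle is the case analysis on $q$, especially ruling out the squiggly outcome, which is where Corollary~\ref{cor:selecetd_nodes} and Lemma~\ref{lem:cycle} are essential; a secondary subtlety is handling boundary facets, which can force case~(b) before case~(a) is reached, as exhibited by the length-three path $S_{13},S_{15},S_{16}$ of Example~\ref{ex:ex2}.
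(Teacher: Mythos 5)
Your squiggly-edge case matches the paper, and your double-edge case has the right shape — an iterative walk in the dual graph that repeatedly pivots across one of the two ``pivot facets'' $S\setminus\{e_f\}$ or $S\setminus\{p\}$ of the current focus double edge, with a case analysis on the incoming vertex. However, there are two genuine gaps.

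The first is that the construction is not well-defined: when both pivot facets of the current focus edge are interior (which happens, e.g., for the double edge $f$ in $S_{15}$ of Example~\ref{ex:ex2}, whose pivot facets are shared with $S_{13}$ and $S_{16}$ respectively), ``one that is internal to $\cT$'' does not pick out a unique next step. With a bad choice the walk oscillates between the two direction-swapped simplices $S_{15}\leftrightarrow S_{16}$ forever. This is exactly where the paper supplies the missing ingredient: it first identifies a \emph{target}, namely the unique maximal simplex $S'$ whose squiggly and oriented double edges agree with $G_S$ except that $f$ is demoted, together with the unique node $w$ with $e_w\in S'\setminus S$ guaranteed by Corollary~\ref{cor:selecetd_nodes}. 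The affine coordinate $\lambda^w_f$ of $e_w$ with respect to the current simplex (Lemma~\ref{lem:coord}) is nonzero, and its sign dictates unambiguously which of the two pivot facets separates $e_w$ from the current simplex; that is the facet to cross, while $S'$ and $w$ are held fixed for the entire walk. Your proof never introduces a target of this kind, so the walk has no preferred direction.

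The second gap is termination. The assertion ``the construction visits pairwise distinct simplices (it is a path in the dual graph)'' is precisely what needs to be proved, and it is not automatic — as the oscillation above shows. Even under the paper's deterministic rule, a closed walk in which every intermediate simplex satisfies condition~(ii) of Definition~\ref{def:anchored_path} must be explicitly excluded, and the paper devotes the final paragraph of its proof to ruling this out (by showing that only the endpoint $S$ can contain both $e_f$ and $\overrightarrow{e}_f$, while at most one other simplex in such a cycle could contain two of the three collinear points $e_f$, $\overrightarrow{e}_f$, $\overleftarrow{e}_f$). Without some monovariant or structural argument, finiteness of $\cT$ alone does not give termination. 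A further, minor, point: the exclusion of $q=\widetilde{e}_g$ has nothing to do with Corollary~\ref{cor:selecetd_nodes}. For $g\ne f$ the simplex $S$ already contains a state vertex of $g$ that survives removal of $e_f$ (or $p$), and adjoining $\widetilde{e}_g$ would violate a fundamental inequality; for $g=f$ the surviving vertex $p$ already forbids $\widetilde{e}_f$. Corollary~\ref{cor:selecetd_nodes} is instead what forces the incoming vertex $q$, when it is a node vertex or a promoting edge vertex, to land in the split-off component, which is the content of your cases (a) and (c).
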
 
\begin{proof}
    First let us assume $\widetilde{e}_f\in S$. Let $S'$ be the maximal simplex for which $\widetilde{e}_f\not\in S'$ and the squiggly and double edges of $G_{S'}$ other than $f$ agree with those of $G_S$. Then by Lemma~\ref{lem:tool} the path $S', S$ is anchored.
    
    Now let us assume $e_f, \overrightarrow{e}_f\in S$. Let $S'$ be the maximal simplex with fewer double edges and for which the squiggly and oriented double edges of $G_{S'}$ other than $f$ agree with those of $G_S$.
    By Corollary~\ref{cor:selecetd_nodes} there is a unique node $w$ such that $e_w\in S'\setminus S$.
    Moreover, by comparing the affine coordinates of $e_w$ with respect to $S$ and $S'$ we deduce that the coefficient $\lambda^w_f$ of $\overrightarrow{e}_f-e_f$ in the former combination does not vanish.
    We conclude that the point $e_w$ is either separated from $e_f$ by the hyperplane spanned by $S\setminus\{e_f\}$, or $e_w$ is 
    separated from $\overrightarrow{e}_f$ by the hyperplane spanned by $S\setminus\{\overrightarrow{e}_f\}$.
    In any of these two cases there is a simplex $S''$ with $f\not\in D(S'')$ neighboring $S$ along the facet $S\setminus\{e_f\}$ or $S\setminus\{\overrightarrow{e}_f\}$.
    We see that $S''=S'$ whenever $G_{S''}$ has fewer double edges than $S$. Thus $S', S$ is an anchored path in this case. Otherwise $S''$ has the same number of double edges as $S$ and hence some (unique) element $f'\in \overleftarrow{E}(S'')\setminus \overleftarrow{E}(S)\cup \overrightarrow{E}(S'')\setminus \overrightarrow{E}(S)$.
    Therefore we may apply the same arguments to $S''$ and the double edge $f'$ while $S'$ and $w$ remain unchanged.
    This procedure continues till we have either found the anchored path $S_1=S',\ldots, S''=S_{k-1}, S_k=S$ or a cycle in which all maximal simplices satisfy condition \ref{it:anchored}) of anchored paths in Definition~\ref{def:anchored_path}.
    We want to show that the latter situation can never occur. Thus suppose we have found such a cycle $S_1,\ldots S_k$ with $S_1=S_k=S$.
    We know that $k>3$ as $S$ and $S''$ share only one facet and $e_w$ lies on the same side as $S''$ of that facet defining hyperplane.
    The simplex $S_1=S_k=S$ is the only one that contains both points $\overrightarrow{e}_f$ and $e_f$, but either $e_f\in S\cap S_2$ and $\overrightarrow{e}_f\in S\cap S_{k-1}$, or  $\overrightarrow{e}_f\in S\cap S_{2}$ and $e_f\in S\cap S_{k-1}$. This is a contradiction because
    each simplex in the sequence contains at least one of the three points $\overrightarrow{e}_f$, $\overleftarrow{e}_f$ and $e_f$ and besides $S$ there is at most one other simplex that contains two of them which would be $\overleftarrow{e}_f$ and $e_f$, but these simplices are the only ones which allow for a change of the decoration.
    
    The situation for $e_f, \overleftarrow{e}_f\in S$ is analogous and thus we have found all the desired anchored paths.
\end{proof}

\begin{lemma}\label{lem:disjoint}
    Let $S$ and $S'$ be two distinct maximal simplices in $\cT$, then $\Delta(S)\cap\Delta(S') = \emptyset$.
\end{lemma}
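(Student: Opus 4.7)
The plan is to argue by contradiction: suppose a point $x$ lies in $\Delta(S)\cap\Delta(S')$ for distinct maximal simplices $S\neq S'$ in $\cT$. Since $\cT$ is a simplicial complex, the two closed simplices meet in a common proper face $F=\conv(S)\cap\conv(S')$, and I will choose $x$ to lie in the relative interior of the minimal common face containing it. The aim is to exhibit a facet of $\conv(S)$ that contains $x$ and lies in $B(S)$, or the analogous statement for $\conv(S')$; either one forces $x\notin\Delta(S)$ or $x\notin\Delta(S')$, the contradiction we seek.

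The central combinatorial claim will be: whenever two maximal simplices $T_1,T_2\in\cT$ are adjacent in the dual graph, sharing a common facet $G=\conv(T_1)\cap\conv(T_2)$, we have $G\in B(T_1)\cup B(T_2)$. I will prove this by analyzing the single-vertex swap relating $T_1$ and $T_2$ via Theorem~\ref{thm:main} and Lemma~\ref{lem:tool}. In the generic case the swap changes the decoration of exactly one edge, shifting $|\widetilde{E}|+|D|$ by one, so condition~(i) of Definition~\ref{def:anchored_path} directly certifies a length-two anchored path putting $G$ in the $B$-set of the side with the larger count. The exceptional case is a swap that only moves the selected node $e_u\in T_1$ to $e_{u'}\in T_2$ without altering any edge decoration: by Corollary~\ref{cor:selecetd_nodes} the nodes $u$ and $u'$ lie in the same connected component of the double-edge subgraph, joined by a path of double edges. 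Starting from a simplex of strictly smaller count, produced in the spirit of Lemma~\ref{lem:num_anchored_paths}, I will assemble a longer anchored path whose internal simplices perform the alternating $e_f\leftrightarrow p$ swaps prescribed by condition~(ii) and walk along this double-edge path, landing at $T_1$ or $T_2$ with $G$ as its terminal facet.

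With the adjacency claim in hand, the full lemma reduces as follows. If the minimal common face $F$ is itself a facet of $\conv(S)$ and of $\conv(S')$, then $S$ and $S'$ are adjacent and the claim is immediately contradicted. Otherwise $F$ has codimension at least two in both simplices, and I will restrict attention to the star of $F$ in $\cT$, whose link is a simplicial sphere by the regularity and unimodularity of $\cT$. The assumption $x\in\Delta(S)\cap\Delta(S')$ forces every facet of $\conv(S)$ containing $F$ to lie outside $B(S)$, and similarly for $S'$; by the adjacency claim this makes both $S$ and $S'$ strict local minima of the count $|\widetilde{E}|+|D|$ within the star. I will then rule out two distinct such local minima by invoking condition~(ii) of an anchored path, which allows double-edge moves inside the star to be chained into longer anchored paths whose final facet contains $F$ and lies in $B(S)$ or $B(S')$.

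The hardest part of the argument will be the selected-node-swap case of the adjacency claim, together with the elimination of competing local minima in the star of $F$; both steps rest on the affine-coordinate structure underlying Lemmas~\ref{lem:coord} and~\ref{lem:num_anchored_paths}. In particular the $\{-1,0,1\}$-valued coefficients along the unique double-edge path between selected nodes, noted in the remark following Corollary~\ref{cor:selecetd_nodes}, will govern which $e_f\leftrightarrow p$ swap is available at each internal simplex and thereby determine the anchored path that must be constructed.
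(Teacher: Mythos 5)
Your proposal follows a genuinely different, and considerably more indirect, route than the paper's. The paper never analyzes dual-graph adjacency at all: it works directly with the vertex difference $S\setminus S'$, invoking Lemma~\ref{lem:num_anchored_paths} to produce, for each squiggly or double-edge vertex lying in $S\setminus S'$, an anchored path whose terminal facet of $S$ contains $S\cap S'$; by symmetry and the uniqueness in Theorem~\ref{thm:main}, $\Delta(S)\cap\Delta(S')\neq\emptyset$ then forces $S=S'$. No orientation of the dual graph, no adjacency claim, and no star-of-$F$ analysis is required.

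Beyond the added complexity, your plan has two concrete gaps. First, the ``exceptional case'' you aim to handle --- a swap moving a selected node $e_u\in T_1$ to $e_{u'}\in T_2$ with all edge decorations unchanged --- cannot occur: by Theorem~\ref{thm:main} a maximal simplex is uniquely determined by its squiggly and (oriented) double edges, so identical edge decorations force $T_1=T_2$. Your case analysis therefore targets a vacuous situation, while a genuine count-preserving adjacency (reversing the orientation of a double edge, $\overrightarrow{e}_f\leftrightarrow\overleftarrow{e}_f$, with $e_f$ in the shared facet) goes unaddressed. Second, the deduction that the adjacency claim ``makes both $S$ and $S'$ strict local minima of the count $|\widetilde{E}|+|D|$ within the star'' does not follow from what you prove: the claim only places the shared facet $G$ in one of the two $B$-sets, and $G\in B(T)$ realized by a longer anchored path need not imply $k(S)<k(T)$, since condition~(i) of Definition~\ref{def:anchored_path} constrains only the first step of the path. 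The subsequent elimination of two competing local minima in the star of $F$ is left entirely unargued, and it is not clear it can be completed without essentially reproducing the paper's direct computation with $S\setminus S'$.
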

\begin{proof}
    We prove this by looking at the difference $S\setminus S'$.
    If $\widetilde{e}_f\in S\setminus S'$, then there is an anchored path $S_1, S_2$ ending in $S_2=S$ such that $S_1\cap S_2= S\setminus\{\widetilde{e}_f\}$, hence 
    \[
    S\cap S'\subseteq S\setminus\{\widetilde{e}_f\} \subseteq B(S).
    \]
    If $e_f,p\in S$ for $p\in\{\overrightarrow{e}_f, \overleftarrow{e}_f\}$ and either $e_f\not\in S'$ or $p\not\in S'$ there must be again an anchored path ending in $S$. This time passing through a facet $F$ of the simplex $S$ which is either $S\setminus\{p\}$ or $S\setminus\{e_f\}$. We see again that $S\cap S'\subseteq B(S)$. Therefore $\Delta(S)$ does not intersect the simplex $\conv S'$ in this case.
    If $\Delta(S)\cap\Delta(S')\neq \emptyset$ we may conclude by symmetry that both decorated graphs $G_{S\setminus S'}$ and $G_{S'\setminus S}$ do not have any squiggly or double edges, but that means that the squiggly or double edges of $G_S$ and $G_{S'}$ agree and hence $S=S'$.
\end{proof}

The anchored paths $S_1,\ldots,S_{k-1}, S_k$ impose an orientation on the edges of the dual graph of $\cT$ by directing edges from the simplex $S_{k-1}$ to $S_k$. Furthermore, Lemma~\ref{lem:disjoint} shows that all edges are indeed in some anchored path. 

\begin{lemma}\label{lem:dual_graph_acyclic}
    The orientation of the dual graph of $\cT$ induced by the anchored paths is acyclic.
\end{lemma}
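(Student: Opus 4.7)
My strategy is to exhibit a monovariant $\Phi$ on the maximal simplices of $\cT$ that strictly increases along every directed edge of the orientation induced by anchored paths; the acyclicity of the orientation then follows immediately.

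The primary component will be $\phi_1(S) := |\widetilde{E}(S)| + |D(S)|$, the number of squiggly plus double edges of the decoration $G_S$. For a length-two anchored path $S_1, S_2$, condition~(i) of Definition~\ref{def:anchored_path} gives $\phi_1(S_1) < \phi_1(S_2)$ directly. For the last edge $S_{k-1} \to S_k$ of a longer anchored path, condition~(ii) at $i = k-1$ fixes a double edge $f \in D(S_{k-1})$ and forces the facet shared with $S_k$ to be either $S_{k-1} \setminus \{p\}$ or $S_{k-1} \setminus \{e_f\}$. I plan a case analysis of the replacement vertex $q \in S_k \setminus S_{k-1}$ using the fundamental inequalities of Section~\ref{sec:good_triang} together with the unimodularity of $\cT$, which forbids the pairs $\{e_f,\widetilde{e}_f\}$ as well as $\{\overleftarrow{e}_f,\overrightarrow{e}_f\}$ without $e_f$. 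This should show that $q$ is exactly one of: (a) the opposite-direction copy of $p$, in which case $f$ remains double with flipped direction and $\phi_1$ is unchanged; (b) an $f'$-vertex of a different edge $f'$, which trades the double structure from $f$ to $f'$ and again keeps $\phi_1$ unchanged; or (c) a node vertex $e_v$, in which case $\phi_1(S_k) = \phi_1(S_{k-1}) - 1$ and $v$ joins the set of selected nodes.

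To handle the tie cases (a), (b) and the decrease case (c), I plan to introduce a secondary invariant $\phi_2$ and take $\Phi := (\phi_1, \phi_2)$ with lexicographic comparison. A natural candidate is $\phi_2(S) := \sum_{v \in S} \psi(v)$, where $\psi$ is the height function inducing $\cT$, perturbed generically so that adjacent simplices receive distinct values. Along the last edge of an anchored path one has $\phi_2(S_k) - \phi_2(S_{k-1}) = \psi(q) - \psi(p)$, and the sign should be tied to the anchored-path direction via the affine-coordinate expansion of the unique selected node $w$ of the component in $G|_{D(S_1)}$ produced in the proof of Lemma~\ref{lem:num_anchored_paths}: the simplex $S_k$ lies on the side of the shared facet towards which $e_w$ is separated, which constrains $\psi(q) - \psi(p)$ via the fundamental inequality associated to the relevant circuit. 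With $\Phi$ strictly monotone along every directed edge, no directed cycle can exist.

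The main obstacle is verifying that $\phi_2$, or an alternative combinatorial tie-breaker built from the selected-node structure of Corollary~\ref{cor:selecetd_nodes} and a fixed total order on the vertices of $G$, consistently resolves the tie in each of the cases (a), (b), and (c) simultaneously. Direction flips in case (a) migrate the selected node of $f$'s component in $G|_D$ across $f$, while case (b) reshuffles the double-edge skeleton between two edges; each subcase must be matched with the correct fundamental inequality. The affine-coordinate analysis of Lemma~\ref{lem:coord} together with the flip characterisation already encoded in the proof of Lemma~\ref{lem:num_anchored_paths} provide the key technical input to perform this matching.
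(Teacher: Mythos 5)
Your monovariant idea is in the right spirit, and the primary quantity $\phi_1(S) = |\widetilde{E}(S)| + |D(S)|$ is indeed the first ingredient in the paper's proof: the paper opens by observing that every simplex in an oriented cycle must have the same number of squiggly and double edges. However, your case (c) — where the replacement vertex is a node vertex $e_v$ and $\phi_1(S_k) = \phi_1(S_{k-1}) - 1$ — is fatal to your plan as stated. With lexicographic comparison, a strict decrease in the primary component cannot be rescued by any secondary component, so $\Phi = (\phi_1,\phi_2)$ would strictly \emph{decrease} along a case-(c) edge regardless of $\phi_2$. You cannot ``handle the decrease case (c)'' with a tie-breaker: a decrease is not a tie. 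What the paper's proof (implicitly) needs at this point is exactly the opposite claim, namely that no directed edge of the orientation decreases $\phi_1$, which amounts to ruling out your case (c) at the final step of an anchored path; you neither prove that case (c) is impossible nor acknowledge that if it occurs your lex strategy collapses. This is a genuine gap, not a detail to be filled in.

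The two approaches also diverge on the secondary argument. The paper does not take $\phi_2(S) = \sum_{v\in S}\psi(v)$; instead it introduces the specific point $q = \tfrac{1}{n}\sum_{w\in V}e_w$, the barycentre of the node vertices, and uses the affine-coordinate expansion of Lemma~\ref{lem:coord} to show that each directed edge $S_{i-1}\to S_i$ crosses a hyperplane that strictly separates $q$ (and the relevant $e_{w'}$) from $S_i$. In other words the anchored-path orientation agrees with a visibility orientation from a fixed interior point, which is acyclic by a standard beneath/beyond argument. The sum of lifted heights of the vertices of a simplex is not a visibility statistic and does not obviously increase across the required facets; in particular your claim that $\phi_2(S_k)-\phi_2(S_{k-1}) = \psi(q)-\psi(p)$ has the right sign is not justified and, even if it were, would only help in cases where $\phi_1$ is tied. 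To turn your sketch into a proof you would have to (i) show that anchored paths never end with a $\phi_1$-decreasing step, or abandon the lexicographic framing, and (ii) replace $\phi_2$ by a genuine visibility argument — at which point you would essentially be reproducing the paper's proof with the point $q$.
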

\begin{proof}
    Suppose there is an oriented cycle $K_1,\ldots,K_\ell, K_{\ell+1}=K_1$ of maximal simplices in the dual graph of the triangulation $\cT$.
    Clearly these simplices all have the same number of squiggly as well as double edges because there is no edge that is oriented from $S$ to $S'$ if $S'$ has fewer squiggly and double edges than $S$.
    To show that there is no oriented cycle consider
    the point
    \[
        q \ =\ \frac{1}{n}\sum_{w\in V} e_w 
    \]
    and an anchored path $S_1,\ldots,S_k$ such that $S_{i-1}$ and $S_{i}$ are two neighboring simplices $S_{i-1}=K_{i'-1}$ and  $S_{i}=K_{i'}$ in the cycle for some index $i'$.
    Furthermore, let $w'$ be the unique node that is selected in $S_1$ but not in $S_{i-1}$ or $S_{i}$ which exist by Corollary~\ref{cor:selecetd_nodes}.
    There is also an edge $f'\in \overleftarrow{E}(S_{i})\setminus \overleftarrow{E}(S_{i-1})\cup \overrightarrow{E}(S_{i})\setminus \overrightarrow{E}(S_{i-1})$ and a node~$v'$ that is selected in $S_{i}$ and is connected to $f'$ by double edges.
    By Lemma~\ref{lem:coord} we may express the point $q$ as the following linear combination of the vertices in $S_i$:
    \begin{align*}\label{eqn:affine-coordinates}
        q  &= \frac{1}{n} \sum_{w\in V} \Biggl( \sum_{u\in V(S_i)} \lambda^w_u e_u + \sum_{f=(u,v)\in D(S_i)} \lambda^w_f (e_u-e_v) \Biggr)\\
        & = \frac{1}{n} \sum_{u\in V(S_i)} \sum_{w\in V}\lambda^w_u e_u 
        + \frac{1}{n}\sum_{f=(u,v)\in D(S_i)} \sum_{w\in V\setminus V(S_i)}
        \lambda^w_f (e_u-e_v)
        \enspace.
    \end{align*}
    Here we may order the endpoints $u$ and $v$ of the edge $f=(u,v)$ such that $e_u-e_v = p-e_f$ for  $p\in\{\overleftarrow{e},\overrightarrow{e}\}\cap S_i$.
    The number $\lambda^{w'}_{f'}\neq 0$. Furthermore for all other nodes $w\in V$ holds either $\lambda^{w}_{f'}=0$, if the path of double edges from $w$ to a selected node in $G_{S_i}$ does not include $f'$, or $\lambda^{w}_{f'}=\lambda^{w'}_{f'}$, if the path to $v'$ uses the edge $f'$ as the path to $v'$ passes through the edge $f'$ in the same direction.    
    We conclude that the facet that separates $S_i$ from $S_{i-1}$ also separates $S_i$ strictly from $e_{w'}$ and $q$.
    A direct consequence of these separating hyperplanes is that the oriented dual graph of $\cT$ has no oriented cycle. If there would be an oriented cycle, then the intersection of separating half-spaces that contain $S_{i-1}$ and not $S_i$ would be lower dimensional, but a neighbourhood of $q$ is fully contained in this intersection.   
\end{proof}

Now we show that the half-open simplices in our construction form indeed a half-open decomposition. 

\begin{proposition}\label{prop:decomp}
    The collection
    $
    \SetOf{\Delta(S)\subseteq\RR^{n+m}}{S\in\cT \text{ is maximal}}
    $
    forms a half-open decomposition of the cosmological polytope $\cC_G$.
\end{proposition}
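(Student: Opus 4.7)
The plan is to verify the two defining properties of a half-open decomposition separately. Each $\Delta(S)$ is by construction a half-open polytope whose closure is $\conv(S) \subseteq \cC_G$, and Lemma~\ref{lem:disjoint} already establishes pairwise disjointness of the collection. Hence only the covering statement remains: for every $x \in \cC_G$ there should exist a maximal $S \in \cT$ with $x \in \Delta(S)$.

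To establish covering, I would fix an arbitrary $x \in \cC_G$ and consider the non-empty collection $\cT(x)$ of all maximal simplices of $\cT$ whose convex hull contains $x$. The anchored-path orientation on the dual graph of $\cT$ is acyclic by Lemma~\ref{lem:dual_graph_acyclic}, so its restriction to the subgraph induced by $\cT(x)$ is an acyclic directed graph and therefore admits a source. That is, there exists a maximal simplex $S \in \cT(x)$ receiving no arrow originating at any other element of $\cT(x)$.

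The crux is then to show $x \in \Delta(S)$, which I would argue by contradiction. If $x \notin \Delta(S)$, then $x$ lies on some facet $F \in B(S)$, so by definition there is an anchored path $S_1, \ldots, S_{k-1}, S_k = S$ with $F = S_{k-1} \cap S$. This anchored path contributes the arrow $S_{k-1} \to S$ in the orientation of the dual graph, and since $x \in F \subseteq \conv(S_{k-1})$, the simplex $S_{k-1}$ belongs to $\cT(x)$. Hence $S$ receives an incoming arrow from within $\cT(x)$, contradicting the choice of $S$ as a source. The main obstacle is ensuring the clean identification of the combinatorial datum ``$F \in B(S)$'' with an incoming arrow at $S$ that lies inside the restricted subgraph; once this identification is made, acyclicity (and its stability under taking induced subgraphs, which underlies the existence of a source) closes the argument.
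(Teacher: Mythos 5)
Your proposal is correct and follows essentially the same argument as the paper: both reduce to the covering statement, pass to the subcollection of maximal simplices containing a given point (you) or a given face of $\cT$ (the paper), use Lemma~\ref{lem:dual_graph_acyclic} to extract a source/minimal element, and derive a contradiction from the existence of an incoming arrow when the point lies in a removed facet.
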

\begin{proof}
By Lemma~\ref{lem:disjoint}, the half-open simplices \(\Delta(S)\) are disjoint.
Thus, we only have to show that for every simplex $S'\in\cT$ the relative interior of $\conv S'$ is covered.

Consider the set $\mathcal{S}$ of all maximal simplices that contain $S'$. This set is non-empty as $S'$ is a face of \(\cT\). 
The anchored paths induce an acyclic orientation on the dual graph of $\cT$ by Lemma~\ref{lem:dual_graph_acyclic} .
This gives a partial order on $\mathcal{S}$. Let $S$ be a minimal element in $\mathcal{S}$ with respect to that ordering.

If the relative interior of \(S'\) is already contained in \(\Delta(S)\), there is nothing to show.
Thus, suppose the relative interior of \(S'\) is not contained in \(\Delta(S)\). Then, there is an 
anchored path $S_1,\ldots, S_{k-1},S_k$ ending in $S_k=S$ such that $F\subseteq S_{k-1}\cap S$. 
In this case $S_{k-1}\in\mathcal{S}$ and $S_{k-1}$ is a smaller element with respect to the partial ordering 
induced by the anchored paths. This contradicts the minimality of the simplex $S$.
Thus the relative interior of $S'$ is covered by the half-open simplex $\Delta(S)$. 
We conclude that all relatively open faces are covered, and therefore the collection of all these half-open simplices forms a half-open decomposition of the polytope $\cC_G$.
\end{proof}

\begin{remark}
    The proof of Lemma~\ref{lem:dual_graph_acyclic} indicates that a perturbation of the point $q$ into the interior of the standard simplex can be used as a point of visibility for the half-open decomposition. However, our more combinatorial approach allows us to connect the removed faces a simplex $S$ to the squiggly and double edges of the decorated graph $G_S$.
\end{remark}

The unimodular half-open decomposition of Proposition~\ref{prop:decomp} allows us naturally to draw conclusions about the $h^\ast$-polynomial of a cosmological polytope. At the center of Ehrhart theory stands the function $j\mapsto|j\,P\cap \ZZ^n|$ which counts the number of lattice points in the $j$-th dilation of a lattice polytope $P\subseteq\RR^n$, i.e., a polytope whose vertices all have integral coordinates. It is a famous result of Eugène Ehrhart \cite{Ehrhart1962} that this function is polynomial, and thus is called the \emph{Ehrhart polynomial}. One may express the generating function of the Ehrhart polynomial of a $d$-dimensional polytope $P$ as rational function, that is
\[
    \sum_{j=0}^\infty |j\,P\cap \ZZ^n|\,z^j \ = \ \frac{h^\ast(z)}{(1-z)^{d+1}} 
\]
where the expression $h^\ast(z)$ is the \emph{$h^\ast$-polynomial} of $P$. The $h^\ast$-polynomial is a polynomial of degree at most $d$ with non-negative coefficients. For further details of Ehrhart theory and $h^\ast$-polynomials we point the reader to the two books \cite{BeckRobins:2015} and \cite{BeckSanyal:2018}. In this article we just need the well known result that the $h^\ast$-polynomial of the half-open simplex $\Delta(S)$ is $z^k$ if $G_S$ has $k$ squiggly and double edges as we removed exactly $k$ of the facets from the maximal simplex $S$. For a proof or further explanations of this fact see for example \cite[Theorem 5.5.3]{BeckSanyal:2018}.

We derive the following statement which was conjectured by Bruckamp, Goltermann, Juhnke, Landin, and Solus in \cite[Conjecture 4.15]{BruckampGoltermannJuhnkeLandinSolus}.
\begin{corollary}\label{cor:h_star}
Let $\cT$ be a good triangulation of $\cC_G$.
The $h^\ast$-polynomial of the cosmological polytope $\cC_G$ is
\[
    h^\ast(\cC_G; z) \ = \ \sum_{S\in\cT} z^{k(S)}
\]
where $k(S) = |\widetilde{E}(S)|+|D(S)|$ denotes the number of squiggly and double edges in $G_S$.
\end{corollary}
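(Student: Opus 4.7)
The plan is to convert the half-open decomposition of Proposition~\ref{prop:decomp} into an equality of $h^\ast$-polynomials. Since the Ehrhart series is additive over disjoint unions of half-open lattice polytopes and $\cC_G = \bigsqcup_{S\in\cT}\Delta(S)$, we immediately have
\[
h^\ast(\cC_G;z) \ = \ \sum_{S\in\cT} h^\ast(\Delta(S);z).
\]
Each $\Delta(S)$ is the half-open unimodular simplex obtained from $\conv(S)$ by removing exactly the facets listed in $B(S)$, and the standard identity recalled just before the statement (cf.~\cite[Theorem 5.5.3]{BeckSanyal:2018}) gives $h^\ast(\Delta(S);z) = z^{|B(S)|}$. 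The corollary therefore reduces to proving that $|B(S)| = k(S)$ for every maximal simplex $S\in\cT$.

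The inequality $|B(S)| \geq k(S)$ is a direct consequence of Lemma~\ref{lem:num_anchored_paths}. Inspecting its proof, each squiggly edge $f\in\widetilde{E}(S)$ contributes an anchored path whose terminal facet is $S\setminus\{\widetilde{e}_f\}$, and each double edge $f\in D(S)$ contributes one whose terminal facet is $S\setminus\{e_f\}$ or $S\setminus\{p\}$ with $p\in\{\overleftarrow{e}_f,\overrightarrow{e}_f\}\cap S$. The corresponding $k(S)$ removed vertices of $S$ are pairwise distinct, so these facets are pairwise distinct elements of $B(S)$.

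For the reverse inequality $|B(S)|\leq k(S)$, the plan is to show that every anchored path $S_1,\ldots,S_k=S$ must terminate through a facet of the form already produced. Fix such a path, let $S_{k-1}\cap S = S\setminus\{p\}$, and argue that $p$ is forced to be either $\widetilde{e}_f$ for some $f\in\widetilde{E}(S)$ or one of $\{e_f,\overleftarrow{e}_f,\overrightarrow{e}_f\}$ for some $f\in D(S)$. When $k=2$, condition~(i) of Definition~\ref{def:anchored_path} forces the single vertex swap $S_1\leftrightarrow S$ to strictly decrease $|\widetilde{E}|+|D|$. Theorem~\ref{thm:main} identifies a maximal simplex with its squiggly-plus-double-edge decoration, so no swap of two selected-node vertices $e_u\leftrightarrow e_{u'}$ is possible, and a case analysis on the fundamental inequalities then confines $p$ to the desired set. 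When $k\geq 3$, condition~(ii) applied to $S_{k-1}$ exhibits a double edge $f'\in D(S_{k-1})$ with $\{e_{f'},p'\}\subseteq S_{k-1}$ such that $S_{k-1}\cap S$ equals $S_{k-1}\setminus\{e_{f'}\}$ or $S_{k-1}\setminus\{p'\}$; tracking this shared facet across the final swap, and using the acyclic orientation of the dual graph from Lemma~\ref{lem:dual_graph_acyclic} to avoid double-counting, again identifies $p$ with a squiggly- or double-edge vertex of $G_S$.

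The main obstacle is the $k\geq 3$ case: intermediate simplices along an anchored path can conserve the total count $|\widetilde{E}|+|D|$ by simultaneously losing one double edge and creating another, so the terminal facet is not visibly tied to a specific edge of $G_S$ by a one-step swap. Once this case is handled, the bound $|B(S)|\leq k(S)$ follows, and combining it with Lemma~\ref{lem:num_anchored_paths} yields $|B(S)|=k(S)$ for every $S$, so summing $z^{k(S)}$ produces the claimed identity
\[
h^\ast(\cC_G;z) \ = \ \sum_{S\in\cT} z^{k(S)}.
\]
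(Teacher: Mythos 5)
You follow the paper's own route: apply the half-open decomposition of Proposition~\ref{prop:decomp}, use additivity of the Ehrhart series over disjoint half-open unimodular simplices, invoke the standard identity $h^\ast(\Delta(S);z)=z^{|B(S)|}$, and reduce the corollary to the combinatorial equality $|B(S)|=k(S)$. Your inequality $|B(S)|\geq k(S)$ via Lemma~\ref{lem:num_anchored_paths} is correct: the $k(S)$ terminal facets removed there correspond to distinct vertices of $S$, hence are distinct.

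The proposal is, however, incomplete exactly where you flag the difficulty. Two things still need to be established, and you assert neither. First, for an anchored path of length $k\geq 3$ ending at $S$, condition~ii) of Definition~\ref{def:anchored_path} applied to $S_{k-1}$ forces $S_{k-1}\cap S = S_{k-1}\setminus\{v\}$ with $v$ one of the two double-edge vertices of some $f'\in D(S_{k-1})$; writing $S=(S_{k-1}\setminus\{v\})\cup\{q\}$, the removed vertex of $S$ is $q$, and one must argue (using Theorem~\ref{thm:main} to rule out swaps that leave the squiggly/double decoration unchanged, and the fundamental inequalities to rule out a newly squiggly vertex) that $q$ is either a double-edge vertex of $S$, or a selected-node vertex $e_u$. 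In the latter case $k(S)=k(S_{k-1})-1$, so the two-node path $S,S_{k-1}$ is itself anchored by condition~i), the dual edge $\{S_{k-1},S\}$ is oriented both ways, and this directed $2$-cycle contradicts Lemma~\ref{lem:dual_graph_acyclic}; hence $q\in D(S)$'s vertex set. Second, even after this, $B(S)$ could a priori contain both facets $S\setminus\{e_f\}$ and $S\setminus\{p\}$ for a single double edge $f$, which would give $|B(S)|>k(S)$; one must rule this out, e.g.\ by the visibility computation implicit in Lemma~\ref{lem:num_anchored_paths} and the remark after Proposition~\ref{prop:decomp}: the affine coordinates of the interior point $\tfrac{1}{n}\sum_w e_w$ in the basis $S$ satisfy $\mu_{e_f}=-\mu_p$, so exactly one of the two facets of $S$ associated with $f$ lies behind $S$ from this point and hence gets removed. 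Without these two steps the bound $|B(S)|\leq k(S)$ is not proved, and the claimed identity does not follow.
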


\section{$h^\ast$-polynomials of cosmological polytopes}\label{sec:h_star_polynomials}
\noindent In this section we going to improve our formula for the $h^*$-polynomial of cosmological polytopes such that we do not require a triangulation and make use of Theorem~\ref{thm:main}. Moreover, we present several examples which connect our findings to previous results and conjectures.

The starting point in this section is the following formula.
\begin{theorem}\label{thm:hstar}
    Let $G$ be a multigraph with $m$ edges and loops, then
    \[
    h^\ast(\cC_G; z) \ = \ \sum (2z)^{|H|} (1+z)^{m-|H|}
    \]
    where the sum is taken over all subsets $H$ of $E$ such that the induced subgraph $G|_H$ with nodes $V$ and edges $H$ is acyclic.
\end{theorem}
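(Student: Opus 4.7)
My plan is to combine the triangulation-dependent sum formula of Corollary~\ref{cor:h_star} with the bijective description of maximal simplices in Theorem~\ref{thm:main}; this reduces the theorem to a short generating-function computation.

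Concretely, I would first fix any good triangulation $\cT$ of $\cC_G$ and use Corollary~\ref{cor:h_star} to write $h^\ast(\cC_G; z) = \sum_{S \in \cT} z^{k(S)}$, where $k(S) = |\widetilde{E}(S)| + |D(S)|$. Then I would apply Theorem~\ref{thm:main} to re-index this sum. According to that theorem, the maximal simplices of $\cT$ are in bijection with triples $(H, \sigma, K)$, where $H \subseteq E$ is an acyclic edge set (the underlying edges of $D(S)$), $\sigma$ is one of the $2^{|H|}$ orientations of $H$, and $K \subseteq E \setminus H$ is the set of squiggly edges; the remaining simply decorated edges and the selected nodes are then forced, so they contribute no additional factors. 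Under this bijection one has $|D(S)| = |H|$ and $|\widetilde{E}(S)| = |K|$.

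Summing then gives
\[
h^\ast(\cC_G; z) \ = \ \sum_{H \subseteq E \text{ acyclic}} \; \sum_{\sigma} \; \sum_{K \subseteq E \setminus H} z^{|H|+|K|} \ = \ \sum_{H \text{ acyclic}} 2^{|H|} z^{|H|} (1+z)^{m-|H|},
\]
using that the innermost sum over $K$ equals $(1+z)^{m-|H|}$ and that there are $2^{|H|}$ orientations of $H$. Rewriting $2^{|H|} z^{|H|} = (2z)^{|H|}$ yields the claimed formula.

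Since this is essentially a direct computation from the two prior results, I do not anticipate a substantial obstacle. The only delicate point I would verify carefully is the treatment of loops: any loop is a cycle of length one, so by Lemma~\ref{lem:cycle} it never lies in the double-edge set $H$, consistent with $H$ being acyclic in the formula. Consequently loops always contribute the factor $(1+z)$, matching their two admissible decorations (squiggly or not).
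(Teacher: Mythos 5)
Your proof is correct and matches the paper's own argument essentially step for step: both fix a good triangulation, invoke Corollary~\ref{cor:h_star}, and then re-index the sum over maximal simplices using the bijection from Theorem~\ref{thm:main} (the paper groups by the acyclic double-edge set $H$ and sums over the $\binom{m-|H|}{\ell}$ choices of $\ell$ squiggly edges times $2^{|H|}$ orientations, which is exactly your triple $(H,\sigma,K)$ written out). The closing observation about loops is a sound sanity check, though the paper does not spell it out.
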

\begin{proof}
    Fix a good triangulation $\cT$ of the polytope $\cC_G$ and a subset $H\subseteq E$ of edges of the graph $G$.
    Now consider all simplices $S$ in $\cT_G$ for which the double edges in $G_S$ agree with $H$.
    By Lemma~\ref{lem:cycle} there is no such simplex whenever $H$ contains a cycle, thus we restrict ourselves to those sets $H$ that are acyclic.
    In this case $G_S$ has precisely $|H|$ double edges and $|E\setminus H|$ edges that are either simply decorated or squiggly. In total we consider $2^m$ simplices that we denote by $\cT(H)$. Furthermore, there are $\tbinom{m-|H|}{\ell}$ ways to select $\ell$ squiggly edges from $E\setminus H$ and $2^{|H|}$ ways to select the orientation of the double edges.
    In each of those cases the number of squiggly and double edges is $\ell+|H|$ and thus Corollary~\ref{cor:h_star} leads to
    \[
    \sum_{S\in\cT(H)} z^{k(S)} \ = \ \sum_{\ell=0}^{m-|H|} 2^{|H|}\binom{m-|H|}{\ell} \, z^{\ell+|H|} \ = \ (2z)^{|H|} (1+z)^{m-|H|}
    \]
    when we restrict ourselves to the simplices in $\cT(H)$. Now varying $H$ leads to the claimed formula as the sets $\cT(H)$ are disjoint.
\end{proof}

One of the many consequences of Theorem~\ref{thm:hstar} is that the $h^*$-polynomials of cosmological polytopes satisfy the following deletion-contraction recurrence.
\begin{corollary}\label{cor:recurrence}
    If $e\in E$ is neither a loop nor a bridge of the graph $G$, then
    \[
        h^\ast(\cC_G; z) = (1+z)\, h^\ast(\cC_{G\setminus e}; z)+2z\, h^\ast(\cC_{G/e}; z) \enspace .
    \]
    Furthermore, if $e$ is a loop of $G$, then
    $h^\ast(\cC_G; z) = (1+z)\, h^\ast(\cC_{G\setminus e}; z)$, 
    and if  $e$ is a bridge, then $h^\ast(\cC_G; z) = (1+3z)\, h^\ast(\cC_{G/e}; z)$. Moreover, if $E=\emptyset$, then $h^\ast(\cC_G; z) = 1$.
\end{corollary}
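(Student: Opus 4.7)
The corollary is a combinatorial consequence of the closed form
\[
h^\ast(\cC_G; z) \ = \ \sum_{H \text{ acyclic}} (2z)^{|H|}(1+z)^{m-|H|}
\]
from Theorem~\ref{thm:hstar}. The plan is, in each case, to split this sum according to whether $e$ lies in $H$ or not, identify each piece with $h^\ast(\cC_{G\setminus e}; z)$ or $h^\ast(\cC_{G/e}; z)$ via an obvious bijection of acyclic edge sets, and track the factor of $(1+z)$ or $2z$ the bookkeeping produces. The base case $E=\emptyset$ is immediate since $H=\emptyset$ is the only acyclic subset.

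For the first piece, acyclic $H\subseteq E$ with $e\notin H$ are precisely the acyclic subsets of $E(G\setminus e)$, so rewriting $(1+z)^{m-|H|}$ as $(1+z)\cdot(1+z)^{(m-1)-|H|}$ and applying Theorem~\ref{thm:hstar} to $G\setminus e$ yields a contribution of $(1+z)\,h^\ast(\cC_{G\setminus e}; z)$. For the second piece, provided $e$ is not a loop, the map $H\mapsto H\setminus\{e\}$ is a bijection between acyclic subsets of $E$ containing $e$ and acyclic subsets of $E(G/e)$, which is the standard fact that $(V,H)$ is a forest if and only if its contraction along $e$ is. Under this bijection the weight $(2z)^{|H|}(1+z)^{m-|H|}$ factors as $2z\cdot(2z)^{|H\setminus\{e\}|}(1+z)^{(m-1)-|H\setminus\{e\}|}$, giving a contribution of $2z\,h^\ast(\cC_{G/e}; z)$. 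Adding the two pieces proves the non-loop, non-bridge recurrence. When $e$ is a loop it is itself a cycle and lies in no acyclic set, so only the first piece survives, yielding the loop case.

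For the bridge case one needs the extra observation that the formula of Theorem~\ref{thm:hstar} depends only on $m$ and the collection of acyclic edge subsets, with no explicit reference to the vertex set. Consequently $h^\ast(\cC_G; z)$ is invariant under adjoining or removing isolated vertices. If $e$ is a bridge then it lies on no cycle, so $G\setminus e$ and $G/e$ share the edge set $E\setminus\{e\}$ and the same family of acyclic subsets; hence $h^\ast(\cC_{G\setminus e}; z)=h^\ast(\cC_{G/e}; z)$, and the sum $(1+z)+2z=1+3z$ collapses the deletion and contraction contributions into the single bridge recurrence. This identification of the deletion and contraction $h^\ast$-polynomials for a bridge is the only step beyond routine bookkeeping, and is the main (but mild) obstacle in the argument.
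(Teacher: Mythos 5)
Your proof is correct and takes essentially the same route as the paper: the recurrence is extracted from the closed formula of Theorem~\ref{thm:hstar} by splitting the sum over acyclic $H$ according to whether $e\in H$, using the standard bijection between acyclic sets containing $e$ and acyclic sets of $G/e$. The one difference is that the paper handles the loop and bridge cases by citing~\cite[Theorem 3.2]{BruckampGoltermannJuhnkeLandinSolus} rather than rederiving them, whereas you treat them directly from Theorem~\ref{thm:hstar}; your observation that for a bridge $G\setminus e$ and $G/e$ share the same edge set and the same family of acyclic subsets (so their $h^\ast$-polynomials coincide, the formula not seeing isolated vertices) is exactly the right bookkeeping and makes your version slightly more self-contained than the paper's.
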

\begin{proof}
    The cosmological polytope is a simplex if $E=\emptyset$, and thus $h^\ast(\cC_G; z) = 1$ in this case.
    We apply \cite[Theorem 3.2.]{BruckampGoltermannJuhnkeLandinSolus}
    if $e$ is a loop or a bridge, and the general recurrence follows directly from Theorem~\ref{thm:hstar}.
\end{proof}

Graph invariants that satisfy the deletion-contraction principle are called \emph{(generalized) Tutte-Grothendiek invariants}; see for the details Chapter~2 of \cite{handbook-Tutte}. The bivariate Tutte polynomial
\[
    \Tutte_G(x,y) \ = \ \sum_{H\subseteq E} (x-1)^{c(H)-c(E)} (y-1)^{c(H)+|H|-|V|} \in \ZZ[x,y]
\]
is a universal Grothendiek invariant of the graph $G$ with edges $E$ and nodes $V$, where $c(H)$ denotes the number of connected components of the subgraph $G|_H$. Hence we obtain the $h^\ast$-polynomial of a cosmological polytope as a specialization of the Tutte polynomial.
\begin{theorem}\label{thm:tutte} Let $G$ be a graph with $m$ edges and loops and rank $r=|V|-c(E)$, then
        \[
        h^\ast(\cC_G; z)\ =\ (1+z)^{m-r}\, (2z)^r \Tutte_G\biggl(\frac{1+3z}{2z},1\biggr) \enspace .
    \]
\end{theorem}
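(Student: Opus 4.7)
The plan is to show that Theorem~\ref{thm:tutte} is a direct algebraic specialization of the formula in Theorem~\ref{thm:hstar}, so no new combinatorics is needed beyond matching the two expressions.

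First, I would specialize the Tutte polynomial at $y=1$. For any subset $H \subseteq E$ one has the well-known inequality $c(H) + |H| \geq |V|$, with equality precisely when $H$ is acyclic (i.e., $G|_H$ is a spanning forest of $G$ with edge set $H$). Therefore setting $y = 1$ annihilates every summand with a cycle, and the remaining acyclic $H$ satisfy $c(H) = |V| - |H|$, so $c(H) - c(E) = r - |H|$. This yields
\[
\Tutte_G(x,1) \ = \ \sum_{H \subseteq E \text{ acyclic}} (x-1)^{r-|H|}.
\]

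Next, I would substitute $x = \frac{1+3z}{2z}$, which gives the clean identity $x - 1 = \frac{1+z}{2z}$. Multiplying by the prefactor $(1+z)^{m-r}(2z)^r$ and distributing inside the sum yields
\[
(1+z)^{m-r}(2z)^r \Tutte_G\!\left(\tfrac{1+3z}{2z},1\right)
\ = \ \sum_{H \text{ acyclic}} (1+z)^{m-r}(2z)^r \left(\tfrac{1+z}{2z}\right)^{r-|H|}
\ = \ \sum_{H \text{ acyclic}} (2z)^{|H|}(1+z)^{m-|H|},
\]
since the exponents telescope: the power of $(1+z)$ becomes $(m-r)+(r-|H|) = m-|H|$, and the power of $(2z)$ becomes $r - (r-|H|) = |H|$. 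The resulting right-hand side is exactly the formula for $h^\ast(\cC_G;z)$ given by Theorem~\ref{thm:hstar}, which completes the argument.

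There is essentially no obstacle; the only thing to verify carefully is the characterization of when $c(H) + |H| - |V|$ vanishes (equivalently, when $H$ is a spanning forest), which one may justify in one line by induction on $|H|$, since deleting an edge of $H$ either leaves $c$ unchanged (if the edge is in a cycle of $G|_H$) or increases $c$ by one (if not). The rest is routine algebraic manipulation of the two generating functions, and the matching prefactor $(1+z)^{m-r}(2z)^r$ is designed precisely to absorb the $(x-1)^{r-|H|}$ factor into the summand of Theorem~\ref{thm:hstar}.
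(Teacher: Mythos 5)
Your argument is correct, but it takes a genuinely different route from the paper. The paper first derives the deletion--contraction recurrence (Corollary~\ref{cor:recurrence}) from Theorem~\ref{thm:hstar} and then invokes the universality of the Tutte polynomial as a Tutte--Grothendieck invariant to obtain Theorem~\ref{thm:tutte}; that is, the specialization parameters $x=\tfrac{1+3z}{2z}$, $y=1$ and the multiplicative prefactor are read off by matching the recurrence constants against the universal recipe from \cite{handbook-Tutte}. You instead bypass the recurrence entirely and verify the identity directly from the corank--nullity subset expansion: setting $y=1$ kills every $H$ with $c(H)+|H|-|V|>0$, i.e.\ every dependent $H$ (note this uses the convention $0^0=1$), the surviving acyclic sets contribute $(x-1)^{r-|H|}$, and the substitution $x-1=\tfrac{1+z}{2z}$ together with the prefactor $(1+z)^{m-r}(2z)^r$ telescopes exactly to the summand $(2z)^{|H|}(1+z)^{m-|H|}$ of Theorem~\ref{thm:hstar}. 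Your proof is more elementary and self-contained, requiring only the defining subset sum of $\Tutte_G$ rather than the universality theorem, whereas the paper's route gives the structural explanation (the $h^\ast$-polynomial is a Tutte--Grothendieck invariant) as a byproduct and makes Corollary~\ref{cor:recurrence} do double duty. Both arguments ultimately rest on Theorem~\ref{thm:hstar}, and your step characterizing when $c(H)+|H|-|V|$ vanishes is exactly the standard fact you sketch; no gap.
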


Not all of these $h^\ast$-polynomials are real-rooted; see also Remark~\ref{rem:real-rooted} below.
However, Petter Br\"and\'en and June Huh showed in \cite{BrandenHuh:2020} that the homogenized bivariate Tutte polynomial of any matroid and thus graph is Lorentzian.
We draw the following conclusion on the $h^*$-polynomials of any cosmological polytope from their finding.

\begin{corollary}\label{cor:log-concave}
    The coefficients $h^*_i$ of the $h^*$-polynomial
    $h^\ast(\cC_G; z)\ =\ \sum_{i=0}^m  h^*_i z^i$ of the cosmological polytope $\cC_G$
    form an ultra log-concave sequence, i.e., \[i(m-i)(h_i^*)^2 \geq (i+1)(m-i+1)h_{i-1}^*\,h_{i+1}^*.\]
\end{corollary}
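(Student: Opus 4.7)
The plan is to realise $h^\ast(\cC_G;z)$, after bivariate homogenization to total degree $m$, as arising from a Lorentzian polynomial by a nonnegative linear substitution, and then invoke the equivalence between Lorentzian-ness of a homogeneous bivariate polynomial and ultra-log-concavity of its coefficient sequence.

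First, I would use Theorem~\ref{thm:hstar} to write
\[
h^\ast(\cC_G;z) \;=\; \sum_{k=0}^{r} I_k\,(2z)^k (1+z)^{m-k},
\]
where $r$ is the rank of the graphic matroid $M(G)$ and $I_k$ is the number of acyclic edge subsets of $G$ of size $k$. Homogenizing by introducing a second variable $w$ so that every monomial has total degree $m$ gives
\[
P(z,w) \;:=\; \sum_{i=0}^{m} h_i^\ast\, z^i w^{m-i} \;=\; \sum_{k=0}^{r} I_k\,(2z)^k (z+w)^{m-k},
\]
which one recognises as $Q(2z,z+w)$ for $Q(u,v) := \sum_{k=0}^{m} I_k\, u^k v^{m-k}$, the degree-$m$ bivariate homogenization of the independence polynomial of $M(G)$ (with $I_k=0$ for $k>r$).

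Next, I would observe that $Q$ is Lorentzian: this is equivalent to the strongest form of Mason's conjecture for $M(G)$, which is precisely the Brändén--Huh theorem \cite{BrandenHuh:2020} already invoked in the paragraph preceding Corollary~\ref{cor:log-concave}. Since $u \mapsto 2z$ and $v \mapsto z+w$ is a substitution by nonnegative linear forms in $z,w$, and since Lorentzian polynomials are preserved under such substitutions (cf.\ \cite[Theorem 2.10]{BrandenHuh:2020}), $P$ is itself Lorentzian. It is then standard that a homogeneous bivariate polynomial of degree $m$ with nonnegative coefficients $(a_i)_{i=0}^m$ is Lorentzian if and only if
\[
i(m-i)\,a_i^2 \;\geq\; (i+1)(m-i+1)\,a_{i-1}\,a_{i+1} \qquad \text{for every } 1\leq i\leq m-1;
\]
one verifies this by taking $m-2$ partial derivatives to reduce to a binary quadratic form, whose Hessian has at most one positive eigenvalue exactly when the above discriminant-type inequality holds. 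Applying the equivalence to $P(z,w)$ yields the desired ultra-log-concavity of $(h_i^\ast)$.

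The main obstacle is a bookkeeping one: different formulations of the Brändén--Huh result use different homogenizations of the matroid independence polynomial or Tutte polynomial, and one must match the particular specialization used above with the expression $\sum_k I_k (2z)^k (z+w)^{m-k}$. Once the map $(u,v)\mapsto(2z,z+w)$ is identified as a nonnegative linear substitution between these two bivariate polynomials, the rest of the argument reduces to well-documented closure properties of the Lorentzian class.
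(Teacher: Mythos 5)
Your proof is correct and rests on the same foundation as the paper's: Brändén and Huh's Lorentzian polynomial machinery (equivalently, their proof of the strongest form of Mason's conjecture). The only difference is bookkeeping — you apply the result to the cycle matroid $M(G)$ directly, homogenize $\sum_k I_k (2z)^k(1+z)^{m-k}$ to $Q(2z,z+w)$, and use closure of the Lorentzian class under nonnegative linear substitutions, whereas the paper plugs the dual matroid $M(G)^*$ into a specific formula of Brändén--Huh with $w=\tfrac{1+z}{2z}$, $q=0$; by Tutte duality $\Tutte_M(x,1)=\Tutte_{M^*}(1,x)$ these routes are interchangeable, and your version is arguably the more transparent of the two.
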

\begin{proof}
    This follows directly from the formula on \cite[page 825] {BrandenHuh:2020} applied to the dual of the cycle matroid of $G$ with $w = \frac{1+z}{2z}$, $q=0$ and multiplied by $(2z)^m$.
\end{proof}

Theorem~\ref{thm:hstar} tells us more about the coefficients of any $h^*$-polynomial of a cosmological polytope. These coefficients are maximal whenever the graph $G$ is a simple tree.  Therefore we get a positive answer to \cite[Conjecture 3.8]{BruckampGoltermannJuhnkeLandinSolus}.
\begin{corollary}\label{cor:bound}
    Let $G$ be a graph with $m$ edges and loops. Then the $i$-the coefficient $h^*_i$ of $h^*(\cC_G,z)$ is non-negative and bounded from above by $3^i\tbinom{m}{i}$.
\end{corollary}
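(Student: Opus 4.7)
The plan is to read off the coefficient $h^*_i$ directly from the formula of Theorem~\ref{thm:hstar} and then bound the resulting sum term by term, with the bound being tight precisely for the case in which $G$ is a simple forest.

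Starting from
\[
h^*(\cC_G; z) \ = \ \sum_{H \text{ acyclic}} (2z)^{|H|} (1+z)^{m-|H|},
\]
I would expand each summand by the binomial theorem and collect the coefficient of $z^i$. Since each factor $(2z)^{|H|}(1+z)^{m-|H|}$ is a polynomial with non-negative coefficients, the non-negativity claim follows immediately, reproving the known non-negativity of $h^*$ for this family. Explicitly, if $a_k$ denotes the number of acyclic edge subsets of size $k$ in $G$, then
\[
h^*_i \ = \ \sum_{k=0}^{i} a_k \, 2^k \binom{m-k}{i-k}.
\]

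For the upper bound I would use the trivial estimate $a_k \le \binom{m}{k}$, since acyclic subsets of size $k$ are in particular $k$-subsets of $E$. Substituting and applying the standard identity $\binom{m}{k}\binom{m-k}{i-k} = \binom{m}{i}\binom{i}{k}$ yields
\[
h^*_i \ \le \ \sum_{k=0}^{i} \binom{m}{k}\, 2^k \binom{m-k}{i-k} \ = \ \binom{m}{i} \sum_{k=0}^{i} \binom{i}{k} 2^k \ = \ 3^i \binom{m}{i},
\]
where the last equality is the binomial theorem applied to $(1+2)^i$.

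There is no real obstacle here; the only thing to note is that the bound is tight (with equality throughout) precisely when every edge subset of size at most $i$ is acyclic — e.g., when $G$ is a simple forest with $m$ edges — which matches the extremal case proposed in \cite[Conjecture~3.8]{BruckampGoltermannJuhnkeLandinSolus}. I would include this remark in the proof to make the geometric content of the inequality transparent.
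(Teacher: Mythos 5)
Your proof is correct and takes essentially the same route as the paper: both bound the sum over acyclic subsets termwise by the sum over all edge subsets (equivalently, the forest case), which collapses to $(1+3z)^m$ and yields the coefficient bound $3^i\binom{m}{i}$. The only difference is that you carry out the coefficient-level computation explicitly via the Vandermonde-type identity $\binom{m}{k}\binom{m-k}{i-k}=\binom{m}{i}\binom{i}{k}$, whereas the paper argues once at the level of the full polynomial.
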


\begin{proof}
    The number of terms in the formula of Theorem~\ref{thm:hstar} is maximal whenever $G$ is a tree.
    In this case
    \begin{align*}
        h^*(\cC_G,z) &= \sum_{H\subseteq E} (2z)^{|H|} (1+z)^{m-|H|}
                     = \sum_{k=0}^m \binom{m}{k} (2z)^k(1+z)^{m-k}
                     = (1+3z)^m
    \end{align*}
showing that $h^*_i\leq 3^i\tbinom{m}{i}$ for any graph.
\end{proof}

\begin{remark}
    We also want to point out that some graphs have the same Tutte polynomial and thus the $h^\ast$-polynomials of their cosmological polytopes are the same. However, these cosmological polytopes might not be unimodular equivalent.
    A concrete example is the cosmological polytope of a path with at least three edges and the cosmological polytope of a star graph with the same number of edges.
    They have the same $h^\ast$-polynomial but are not unimodular equivalent as the latter graph has a vertex $v$ of degree more than two. Thus in the latter polytope lies the point $e_v$ in more than three lines of vertices.
    A pair of $2$-connected graphs with a similar property can be found in \cite[Figure 5.9]{Oxley:2011}.
\end{remark}

Now we take a closer look at four families of graphs whose cosmological polytopes have been studied previously.
The first example are multitrees which generalizes the trees we considered for Corollary~\ref{cor:bound}.
\begin{example}[Multitrees]\label{ex:multitrees} Let $T$ be a multitree with $n$ nodes and $E_1, \ldots E_{n-1}$ classes of parallel edges such that 
$\sum_{i=1}^{n-1} |E_i| = m$.
We denote by $a_i$ the number of edges in $E_i$. Then
by Theorem~\ref{thm:hstar} the $h^\ast$-polynoimal of 
$\cC_T$ is
\begin{align*}
    h^\ast(\cC_T;z) \ = \ \sum_{I\subseteq [n-1]} \Bigl( \prod_{i\in I} a_i\Bigr) (2z)^{|I|} (1+z)^{m-|I|}
    \ = \ \prod_{i=1}^{n-1} \Bigl( 2 a_i z (1+z)^{a_i-1} + (1+z)^{a_i} \Bigr)
\end{align*}
where $I$ indexes for which classes of parallel edges a double edge is selected. If $i\in I$ then there are $a_i$ ways to select the double edge in $E_i$. This formula agrees with \cite[Theorem~4.9]{BruckampGoltermannJuhnkeLandinSolus}.
\end{example}

A second family of graphs that had been studied are multicycles. Here we make use of the inclusion-exclusion principle.
\begin{example}[Multicycles]\label{ex:multicycles} Let $C$ be a multicycle with $n$ nodes and $E_1, \ldots E_n$ classes of parallel edges.
such that, as in the previous example, we denote by $a_i$ the number of edges in $E_i$ and $\sum a_i = m$, then
by Theorem~\ref{thm:hstar} and the calculation in Example~\ref{ex:multitrees}
\begin{align*}
    h^\ast(\cC_C;z) \ = \ \prod_{i=1}^{n} \Bigl( 2 a_i z (1+z)^{a_i-1} + (1+z)^{a_i} \Bigr) 
    - \prod_{i=1}^{n}  2 a_i z (1+z)^{a_i-1} \enspace .
\end{align*}
Where the minuend is obtained by allowing for all subsets of edges and the subtrahend is the correction that excludes those graphs for which the double edges from a cycle. Also in this situation the formula was known previously and it agrees with \cite[Theorem~4.6]{BruckampGoltermannJuhnkeLandinSolus}.
\end{example}

\begin{remark}\label{rem:real-rooted}
    The above example shows that the $h^\ast$-polynomial of a simple three-cycle is $(1+3z)^3-(2z)^3 = (z+1) (19z^2+8z+1)$ and thus not real-rooted. 
\end{remark}

Before we look at the next family of graphs we want to reformulate the formula of Theorem~\ref{thm:hstar} by applying the inclusion-exclusion principle we just saw in Example~\ref{ex:multicycles}.
For this purpose it is convenient to recall the next standard definition from enumerative combinatorics.
\begin{definition} Let $P$ be a poset with minimal element $\emptyset$ which is \emph{locally finite}, i.e, every interval of $P$ is finite. The \emph{M\"obius function} on a locally fintite poset $P$ is the unique function $\mu:P\times P \to \ZZ$  satisfying the following three properties.
\begin{itemize}
    \item $\mu(s,s) = 1$
    \item $\mu(s,t) = 0$ whenever $s\not\leq t$
    \item and for all $s\leq t$ holds
    \[
        \sum_{s\leq r \leq t} \mu(s,r) = 0
    \]
\end{itemize}
We write $\mu(s)$ for $\mu(\emptyset, s)$.
\end{definition}
The interested reader may consult Richard Stanley's book \cite{Stanley:2012} or Martin Aigner's book \cite{Aigner:1997} for further properties and applications of Möbius functions of lattices and posets, particularly the M\"obius inversion formula, which we are going to apply for our next result.

\begin{theorem}\label{thm:hstar2}
    Let $G$ be a multigraph with $m$ edges and loops,
    then
    \[
        h^\ast(\mathcal{C}_G;z)\ =\ \sum_{H} \mu(H)\, (2z)^{|H|}\,(1+3z)^{m-|H|}
    \]
    where the sum is taken over all subsets $H\subseteq E$ of $G$ such that $G|_H$ has no bridge, and $\mu$ is the Möbius function on the poset formed by these sets ordered by inclusion. 
\end{theorem}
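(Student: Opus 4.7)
The plan is to derive the claimed formula from Theorem~\ref{thm:hstar} by interchanging sums and then invoking the defining property of the Möbius function. Starting from the right-hand side of the claim and expanding $(1+3z)^{m-|H|} = ((1+z)+2z)^{m-|H|}$ via the binomial theorem gives
\[
\sum_{H \text{ bridge-free}} \mu(H)\,(2z)^{|H|}(1+3z)^{m-|H|}
\;=\; \sum_{H \text{ bridge-free}} \sum_{K\subseteq E\setminus H} \mu(H)\,(2z)^{|H|+|K|}(1+z)^{m-|H|-|K|}.
\]
Substituting $J = H \sqcup K$ and swapping the order of summation yields
\[
\sum_{J\subseteq E}(2z)^{|J|}(1+z)^{m-|J|}\!\!\sum_{\substack{H\subseteq J \\ H \text{ bridge-free}}} \mu(H).
\]
In view of Theorem~\ref{thm:hstar}, the theorem reduces to the combinatorial identity
\[
\sum_{\substack{H\subseteq J \\ H \text{ bridge-free}}} \mu(H) \;=\; \begin{cases} 1 & \text{if } G|_J \text{ is acyclic},\\ 0 & \text{otherwise}, \end{cases}
\]
for every $J\subseteq E$.

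To prove this identity, the key structural observation is that \emph{the union of two bridge-free edge sets is again bridge-free}. Indeed, if $H = H_1\cup H_2$ with $H_1,H_2$ bridge-free, then any edge $e\in H$ lies in some cycle of whichever $H_i$ contains it, and this cycle persists in $H$; hence $e$ is not a bridge of $G|_H$. Since the collection of bridge-free subsets of $J$ is nonempty (it contains $\emptyset$), finite, and closed under union, it has a unique maximum element $J^*\subseteq J$. Concretely, $J^*$ is the union of all cycles contained in $G|_J$, so $J^* = \emptyset$ exactly when $G|_J$ is acyclic. Moreover, every bridge-free subset of $J$ is contained in $J^*$ and is thus an element of the interval $[\emptyset,J^*]$ of the poset $P$ of bridge-free subsets of $E$ ordered by inclusion.

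With this in place, the inner sum rewrites as $\sum_{\emptyset\leq H\leq J^*}\mu(\emptyset,H)$ in $P$, and the defining property of $\mu$ immediately evaluates it: it equals $\mu(\emptyset,\emptyset)=1$ if $J^*=\emptyset$, and $0$ otherwise. Matching against the desired case distinction completes the proof.

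The only nontrivial step is the union-closure of bridge-freeness that produces the unique maximum $J^*$; once this is observed, the identity is a direct application of the recurrence defining $\mu$, and everything else is bookkeeping with the binomial identity $(1+z)+2z=1+3z$.
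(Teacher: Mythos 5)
Your proof is correct. Both your argument and the paper's live on the same poset (bridge-free subsets, equivalently unions of cycles), but the presentations differ in a way worth noting. The paper works \emph{forward}: it defines $f(T)$ to be the contribution from subsets $J$ whose set of double edges has $T$ as the union of its cycles, checks that the up-sum $g(S)=\sum_{T\supseteq S}f(T)$ has the closed form $(2z)^{|S|}(1+3z)^{m-|S|}$, and then invokes the M\"obius inversion formula to solve for $f(\emptyset)=h^\ast(\cC_G;z)$. You work \emph{backward}: you expand $(1+3z)^{m-|H|}=\bigl((1+z)+2z\bigr)^{m-|H|}$, reindex, and reduce the claim to the identity $\sum_{H\subseteq J,\ H \text{ bridge-free}}\mu(H)=[\,G|_J \text{ acyclic}\,]$, which you then prove directly from the defining recurrence of $\mu$. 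The structural ingredient you isolate---that bridge-free sets are closed under union, so each $J$ has a unique maximal bridge-free subset $J^\ast$ (the union of all cycles in $G|_J$)---is exactly what the paper needs implicitly to speak of ``the poset of unions of cycles'' and to equate $g(S)$ with a sum over all $H\supseteq S$, but the paper never spells it out. Your route is slightly more elementary (no appeal to the inversion theorem, only to the recurrence defining $\mu$) and arguably more transparent, at the modest cost of being a verification rather than a derivation.
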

\begin{proof}
    We define two functions $f,g$ from the poset of unions of cycles of $G=(V,E)$ to $\ZZ[z]$ which are related via  M\"obius inversion.
    Let 
    \[
        f(T)\ =\ \sum_H (2z)^{|H|}(1+z)^{m-|H|}
    \]
    where the sum taken over all sets $H\subseteq E$ that contain $T$ and no cycle of $G$ that is not already in $T$.
    In particular, $f(\emptyset) = h^\ast(\mathcal{C}_G;z)$.
    Furthermore, let
    \[
        g(S)\ =\ \sum_T f(T)
    \]
    where the sum is taken over all elements $T$ in the poset that contain $S$, thus $g(S) = (2z)^{|S|}(1+3z)^{m-|S|}$ which follows from the binomial theorem with an additional factor $2^{|S|}$. Now the M\"obius inversion formula implies that
    \[
    f(S)\ =\ \sum_H \mu(S,H) (2z)^{|H|}(1+3z)^{m-|H|}
    \]
    where the sum is again taken over all elements $H$ in the poset that contain $S$. For $S=\emptyset$ we obtain the desired formula of the $h^\ast$-polynomial.
\end{proof}

Now we are prepared to take a closer look at $\Theta$-graphs. A \emph{$\Theta$-graph} is the union of three edge-disjoint paths connecting two (distinct) nodes.
We denote by $\Theta_{a,b,c}$ the $\Theta$-graph whose paths are of length $a$, $b$ and $c$, respectively.
That is $\Theta_{a,b,c}$ has $m=a+b+c$ edges, $n=a+b+c-1$ nodes and cycles of length $a+b$, $a+c$ and $b+c$. 
The simplest example is $\Theta_{1,1,1}$ the multitree with two nodes and three parallel edges.

\begin{figure}[t]
  \centering
    \begin{tikzpicture}[scale = 0.65,
                    color = {black}]
  
  \definecolor{myblue}{rgb}{0,0.2,0.7}
  \tikzstyle{linestyle} = [ultra thick, line cap=round, line join=round];
  \tikzstyle{linestyle2} = [dotted, line cap=round, line join=round];
  \tikzstyle{nodestyle} = [draw=black, thick, circle, fill=white, inner sep=1.5pt];

  \coordinate (tu) at (0, 0);
  \coordinate (tv) at (3, 0);
  \coordinate (tm) at (1.5, 0);
  \coordinate (tl1) at (0.8, -.7);
  \coordinate (tl2) at (2.2, -.7);
   \coordinate (mu) at (1.5, -1.3);

  \draw[linestyle] (tu) to[bend left=40] (tv);
  \draw[linestyle] (tu) -- (tm) -- (tv);
  \draw[linestyle] (tu) -- (tl1) -- (tl2) -- (tv);
   \node[nodestyle] at (tu) {};
   \node[nodestyle] at (tv) {};
   \node[nodestyle] at (tm) {};
   \node[nodestyle] at (tl1) {};
   \node[nodestyle] at (tl2) {};
   \node[myblue,inner sep=1pt] at (mu) {$\mu(E) = 2$};

  \coordinate (t) at (-5.5,-3);
  \coordinate (Lu) at ($(tu)+(t)$);
  \coordinate (Lv) at ($(tv)+(t)$);
  \coordinate (Lm) at ($(tm)+(t)$);
  \coordinate (Ll1) at ($(tl1)+(t)$);
  \coordinate (Ll2) at ($(tl2)+(t)$);

   \draw[linestyle] (Lu) to[bend left=40] (Lv);
   \draw[linestyle] (Lu) -- (Lm) -- (Lv);
   \draw[linestyle2] (Lu) -- (Ll1) -- (Ll2) -- (Lv);
   \node[nodestyle] at (Lu) {};
   \node[nodestyle] at (Lv) {};
   \node[nodestyle] at (Lm) {};
   \node[nodestyle] at (Ll1) {};
   \node[nodestyle] at (Ll2) {};
   \node[myblue,inner sep=1pt] at ($(mu)+(t)$) {$\mu(C_{ab}) = -1$};
  
  \coordinate (t2) at (0,-3);
  \coordinate (Mu) at ($(tu)+(t2)$);
  \coordinate (Mv) at ($(tv)+(t2)$);
  \coordinate (Mm) at ($(tm)+(t2)$);
  \coordinate (Ml1) at ($(tl1)+(t2)$);
  \coordinate (Ml2) at ($(tl2)+(t2)$);

   \draw[linestyle] (Mu) to[bend left=40] (Mv);
   \draw[linestyle2] (Mu) -- (Mm) -- (Mv);
   \draw[linestyle] (Mu) -- (Ml1) -- (Ml2) -- (Mv);
   \node[nodestyle] at (Mu) {};
   \node[nodestyle] at (Mv) {};
   \node[nodestyle] at (Mm) {};
   \node[nodestyle] at (Ml1) {};
   \node[nodestyle] at (Ml2) {};
   \node[myblue,inner sep=1pt] at ($(mu)+(t2)$) {$\mu(C_{ac}) = -1$};
   
  \coordinate (t3) at (5.5,-3);
  \coordinate (Ru) at ($(tu)+(t3)$);
  \coordinate (Rv) at ($(tv)+(t3)$);
  \coordinate (Rm) at ($(tm)+(t3)$);
  \coordinate (Rl1) at ($(tl1)+(t3)$);
  \coordinate (Rl2) at ($(tl2)+(t3)$);

   \draw[linestyle2] (Ru) to[bend left=40] (Rv);
   \draw[linestyle] (Ru) -- (Rm) -- (Rv);
   \draw[linestyle] (Ru) -- (Rl1) -- (Rl2) -- (Rv);
   \node[nodestyle] at (Ru) {};
   \node[nodestyle] at (Rv) {};
   \node[nodestyle] at (Rm) {};
   \node[nodestyle] at (Rl1) {};
   \node[nodestyle] at (Rl2) {};
   \node[myblue,inner sep=1pt] at ($(mu)+(t3)$) {$\mu(C_{bc}) = -1$};

  \coordinate (t4) at (0,-6);
  \coordinate (du) at ($(tu)+(t4)$);
  \coordinate (dv) at ($(tv)+(t4)$);
  \coordinate (dm) at ($(tm)+(t4)$);
  \coordinate (dl1) at ($(tl1)+(t4)$);
  \coordinate (dl2) at ($(tl2)+(t4)$);

   \draw[linestyle2] (du) to[bend left=40] (dv);
   \draw[linestyle2] (du) -- (dm) -- (dv);
   \draw[linestyle2] (du) -- (dl1) -- (dl2) -- (dv);
   \node[nodestyle] at (du) {};
   \node[nodestyle] at (dv) {};
   \node[nodestyle] at (dm) {};
   \node[nodestyle] at (dl1) {};
   \node[nodestyle] at (dl2) {};
   \node[myblue,inner sep=1pt] at ($(mu)+(t4)$) {$\mu(\emptyset) = 1$};

\end{tikzpicture}
  \caption{The poset of unions of cycles of the graph $\Theta_{a,b,c}$ with $a=1$, $b=2$ and $c=3$ and its five elements $\emptyset$, $C_{ab}$, $C_{ac}$, $C_{bc}$ and $E=C_{ab}\cup C_{ac}\cup C_{bc}$. }
  \label{fig:theta}
\end{figure}
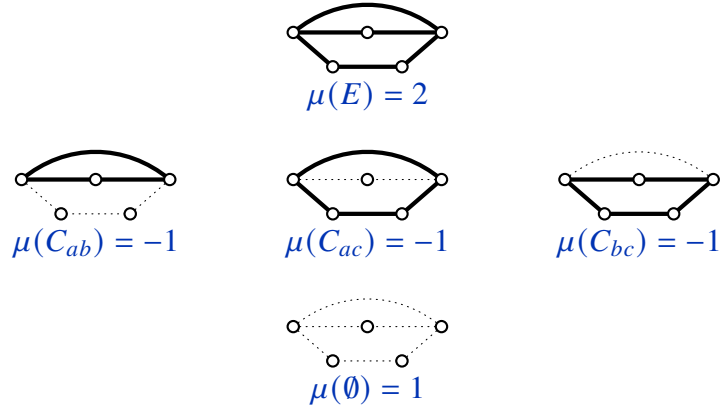

\begin{example}[$\Theta$-graphs]\label{ex:theta} The graph $\Theta_{a,b,c}$ has three cycles and the union of any two of them covers the entire graph, thus the poset in Theorem~\ref{thm:hstar2} consists of five elements; see also Figure~\ref{fig:theta}.
Applying the theorem yields that the $h^\ast$-polynomial of the cosmological polytope of $\Theta_{a,b,c}$ is
\[
(1+3z)^{a+b+c}-(2z)^{a+b}(1+3z)^c-(2z)^{a+c}(1+3z)^b-(2z)^{b+c}(1+3z)^a+2(2z)^{a+b+c}\enspace ,
\]
because $\mu(\emptyset) = 1$, $\mu(C) = -1$ for each cycle and $\mu(E) = 2$. 
\end{example}
The previous example gives a positive answer to \cite[Conjecture 4.14.]{BruckampGoltermannJuhnkeLandinSolus}.

Our last example are the complete bipartite graphs $K_{2,n}$ whose cosmological polytopes were the content of Erik Landin's master thesis \cite{Landin:2023}.
\begin{example}[Some Bipartite Graphs]\label{ex:bipartite} The complete bipartite graph $K_{2,n}$ has $2n$ edges and $n+2$ nodes. The poset generated by its cycles is formed by the elements of the Boolean lattice on $n$ elements of size at least two and the empty set as they describe the selected paths. The M\"obius function of this poset is the alternating function $(-1)^j (j-1)$ if $j\geq 2$ paths are selected. Thus Theorem~\ref{thm:hstar2} leads to the following 
\begin{align*}
h^\ast(\cC_{K_{2,n}};z) &= (1+3z)^{2n} - \sum_{j=2}^n (-1)^j (j-1) \binom{n}{j} (2z)^{2j}(1+3z)^{2n-2j}\\
&= \Bigl(1+6z+5z^2\Bigr)^{n} + 4nz^2\Bigl(1+6z+5z^2\Bigr)^{n-1}
\enspace .
\end{align*}
Evaluating this polynomial at $z=1$ results in Landin's volume formula $(1+\tfrac{n}{3})12^{n}$.
\end{example}

The original motivation to study the $h^\ast$-polynomials of cosmological polytopes was to get a better understanding of their volume to estimate the complexity of the computation of wavefunctions. Thus we close with a combinatorial formula for the volume. This formula is a direct consequence of Theorem~\ref{thm:hstar} and Theorem~\ref{thm:tutte}.

\begin{corollary}\label{cor:volume}
    The (normalized) volume of the cosmological polytope $\cC_G$ is given by
    \[
        h^\ast(\cC_G; 1) \ = \ 2^m\, \Tutte_G(2,1)
    \]
    where $\Tutte_G$ denotes the Tutte polynomial of $G$ and hence $\Tutte_G(2,1)$ is the number of acyclic edge subsets of the graph $G$.
\end{corollary}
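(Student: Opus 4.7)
The plan is to combine standard Ehrhart theory with Theorem~\ref{thm:hstar} (or equivalently Theorem~\ref{thm:tutte}). The first ingredient is the well-known fact that for any full-dimensional lattice polytope $P$ of dimension $d$ the normalized volume equals $h^\ast(P;1)$; this is immediate from the definition of the $h^\ast$-polynomial via the generating function of the Ehrhart polynomial, since $\lim_{j\to\infty} j^{-d}|jP\cap\ZZ^n|$ equals the leading coefficient of the expansion of $h^\ast(z)/(1-z)^{d+1}$, which up to a factor of $d!$ is $h^\ast(1)$. This reduces the statement to evaluating $h^\ast(\cC_G;z)$ at $z=1$.

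For the evaluation I would substitute $z=1$ directly into the formula of Theorem~\ref{thm:hstar}. Each summand becomes $2^{|H|}\cdot 2^{m-|H|}=2^m$, which is independent of $H$, so
\[
h^\ast(\cC_G;1)\ =\ 2^m\cdot\#\SetOf{H\subseteq E}{G|_H\text{ is acyclic}} \enspace .
\]
The second step is then to identify the number of acyclic edge subsets with $\Tutte_G(2,1)$. This is a classical specialization of the Tutte polynomial: setting $x=2$, $y=1$ in $\Tutte_G(x,y)=\sum_{H\subseteq E}(x-1)^{c(H)-c(E)}(y-1)^{c(H)+|H|-|V|}$ forces the $(y-1)$-factor to vanish unless $c(H)+|H|-|V|=0$, i.e.\ unless $H$ is a spanning forest of $G$, in which case $c(H)=c(E)$; so $\Tutte_G(2,1)$ counts the spanning forests, which is precisely the number of acyclic edge subsets. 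Alternatively, one may simply plug $z=1$ into Theorem~\ref{thm:tutte}, which gives $(1+z)^{m-r}(2z)^r=2^{m-r}\cdot 2^r=2^m$ and evaluation of $\Tutte_G$ at $(2,1)$ directly.

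There is essentially no obstacle here beyond quoting the two ingredients in the right order; the only thing to be slightly careful about is the normalization convention for the volume and the standard Tutte-polynomial identity $\Tutte_G(2,1)=\#\{\text{spanning forests of }G\}$, both of which are textbook facts.
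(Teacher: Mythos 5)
Your proposal is correct and follows essentially the same route as the paper, which also obtains the corollary by evaluating Theorem~\ref{thm:hstar} (equivalently Theorem~\ref{thm:tutte}) at $z=1$. One small slip in your Tutte-polynomial aside: for an acyclic $H\subseteq E$ one has $c(H)\geq c(E)$ rather than $c(H)=c(E)$ in general, but this is harmless since $(x-1)^{c(H)-c(E)}=1^{c(H)-c(E)}=1$ at $x=2$ regardless, so $\Tutte_G(2,1)$ does count all acyclic edge subsets as you conclude.
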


\begin{remark}
    Combining the upper bound of Corollary~\ref{cor:bound} with the volume formula of Corollary~\ref{cor:volume} 
    reveals that the volume or the number of unimodular simplices in a triangulation of  a cosmological polytope $\cC_G$ of a graph with $m$ edges is at most $4^m$. This bound is achieved if and only if the graph $G$ is a forest. The minimal number of simplices in a unimodular triangulation is $2^m$; see also \cite[Theorem 3.5]{BruckampGoltermannJuhnkeLandinSolus}, this is the case whenever all edges of the graph $G$ are loops and hence $\Tutte_G(2,1)=1$.
\end{remark}

\section*{Acknowledgments} 
\noindent The authors would like to thank various colleagues for fruitful discussions and comments that helped us to improve this article.
 In particular, Liam Solus and Katharina Jochemko.
 We also want to thank Raman Sanyal for his support of the project.

\bibliographystyle{alpha}
\bibliography{bibliography}

\end{document}